\long\def\symbolfootnote[#1]#2{\begingroup%
\def\thefootnote{\fnsymbol{footnote}}\footnote[#1]{#2}\endgroup}
\newtheorem{thm}{Theorem}[section]
\theoremstyle{definition}
\newtheorem{defn}[thm]{Definition}
\theoremstyle{remark}
\newtheorem{rem}[thm]{Remark}
\begin{document}

\title[Curvature properties of pseudo-sphere bundles]{Curvature properties of pseudo-sphere bundles over paraquaternionic manifolds}

\author[G.E. V\^{\i}lcu, R.C. Voicu]{Gabriel Eduard V\^{\i}lcu, Rodica Cristina Voicu}

\date{}
\maketitle

\abstract In this paper we obtain several curvature properties of the
twistor and reflector spaces of a paraquaternionic K\"{a}hler
manifold and prove the existence of both positive and negative mixed
3-Sasakian
structures in a principal $SO(2,1)$-bundle over a paraquaternionic K\"{a}hler manifold. \\[1mm]
{\em AMS Mathematics Subject Classification:} 53C26, 53C28, 53C42, 53C50.\\
{\em Key Words and Phrases:} twistor space, Einstein manifold,
paraquaternionic structure, mixed 3-structure.

\endabstract

\section{Introduction}

\symbolfootnote[0]{This work was partially supported by CNCSIS -
UEFISCSU, project PNII - IDEI code 8/2008, contract no. 525/2009.}

An almost paraquaternionic structure on a smooth manifold $M$ is a
rank 3-subbundle of $\text{End}(TM)$ which is locally spanned by two
almost product structures and one almost complex structure which
satisfy relations of anti-commutation. Under certain conditions on
the holonomy group of a metric adapted to such a structure, one
arrives to the concept of paraquaternionic K\"{a}hler manifold. If
$M$ is a manifold endowed with an almost paraquaternionic structure,
then two kinds of unit pseudo-sphere bundle can be considered
\cite{IMZ}: the twistor space $Z^{-}(M)$ of $M$ (the unit
pseudo-sphere bundle with fibre the 2-sheeted hyperboloid
$S^{2}_{1}(-1)$) and the reflector space $Z^{+}(M)$ of $M$ (the unit
pseudo-sphere bundle with fibre the 1-sheeted hyperboloid
$S^{2}_{1}(1)$). These spaces of great importance in theoretical
physics (see e.g. \cite{P,W}) have been studied recently by several
authors \cite{AC,AMT,BDM,DJS,DAV,IZ,SCF}. We note that if $M$ is a
paraquaternionic K\"{a}hler manifold, then the twistor space
$Z^{-}(M)$ admits two canonical almost complex structures $I_1$ and
$I_2$ ($I_1$ integrable in dimension greater than 4, $I_2$ never
integrable) and the reflector space $Z^{+}(M)$ admits two canonical
almost para-complex structures $P_1$ and $P_2$ ($P_1$ integrable in
dimension greater than 4, $P_2$ never integrable). Moreover, both
$Z^{+}(M)$ and $Z^{-}(M)$ carries a natural 1-parameter family of
pseudo-Riemannian metrics $h_{t}$, $t\neq 0$, such that the
canonical projection $ \pi_{\pm}:Z^{\pm}(M)\rightarrow M$ is a
pseudo-Riemannian submersion \cite{AC}. After giving some
introductory results, in Section 2, we study the twistor and
reflector spaces of a paraquaternionic K\"{a}hler manifold $M$ and
the almost pseudo-Hermitian geometry of $(Z^{-}(M), I_{i}, h_{t})$
and $(Z^{+}(M), P_{i}, h_{t})$, $i = 1, 2$. In Section 3 we give
explicit formulas of the pseudo-Riemannian curvature tensors of
$h_{t}$ in terms of the curvature of $M$ and study its properties in
details. In particular, we determine the affiliation to Gray and
para-Gray classes of $(Z^{-}(M), I_{i}, h_{t})$ and $(Z^{+}(M),
P_{i}, h_{t})$, $i = 1, 2$.

On the other hand, the counterpart in odd dimension of a
paraquaternionic structure was introduced in \cite{IMV}. It is
called a mixed 3-structure, and appears in a natural way on
lightlike hypersurfaces in paraquaternionic K\"{a}hler manifolds.
Such hypersurfaces inherit two almost paracontact structures and an
almost contact structure, satisfying analogous conditions to those
satisfied by almost contact 3-structures \cite{KUO}. This concept
has been refined in \cite{CP}, where the authors have introduced
positive and negative  metric mixed 3-structures.

W.M. Boothby and H.C. Wang gave in \cite{BW} a fibering of a compact
manifold $M$ with a regular contact structure as a principal bundle
over a symplectic manifold $M'$ with 1--dimensional toroidal group
$A$. Associating a Riemannian metric $g'$ with the symplectic
structure, $M'$ is an almost K\"{a}hlerian manifold and consequently
$M$ is a contact metric one. Y. Hatakeyama proved in \cite{HAT} that
there exists a circle bundle over an almost complex manifold that
admits a normal contact metric structure. Moreover, K. Sakamoto was
proved in \cite{SAK} that every quaternionic K\"{a}hler manifold
admits a principal bundle $P$ over it, whose structure group is
$\text{SO}(3)$, while Konishi \cite{KO} constructed a Sasakian
3-structure in $P$, which is canonically associated with a given
quaternion K\"{a}hler structure of positive scalar curvature. It can
be easily checked that the Sakamoto's construction remains valid if
we consider a paraquaternionic K\"{a}hler manifold, but now the
structure group is $\text{SO}(2,1)$. The main purpose of the last
Section is to prove that the principal bundle associated with a
paraquaternionic K\"{a}hler manifold $(M,\sigma,g)$ admits both a
positive and a negative mixed 3-Sasakian structure, provided that
$M$ is of non-zero scalar curvature. Moreover, we investigate the
existence of non-trivial Einstein metrics in the canonical variation
of a pseudo-Riemannian submersion which projects a metric mixed
3-contact structure onto a paraquaternionic K\"{a}hler structure.

\section{Preliminaries}

Let $\widetilde{\mathbb{H}}$ be the algebra of paraquaternions and
identify $\widetilde{\mathbb{H}}^{n}=\mathbb{R}^{4n}$. It is well
known that $\widetilde{\mathbb{H}}$ is a real Clifford algebras with
$\widetilde{\mathbb{H}}=C(1,1)\cong C(0,2)$. In fact
$\widetilde{\mathbb{H}}$ is generated by the unity $1$ and
generators $J^{0}_{1},J^{0}_{2},J^{0}_{3}$ satisfying the
paraquaternionic identities
\begin{equation}\label{p1}
      (J^{0}_{1})^{2}=(J^{0}_{2})^{2}=-(J^{0}_{3})^{2}=1,\  J^{0}_{1}J^{0}_{2}=-J^{0}_{2}J^{0}_{1}=J^{0}_{3}.
\end{equation}

We assume that $\widetilde{\mathbb{H}}$ acts on
$\widetilde{\mathbb{H}}^{n}$ by right multiplication and use the
convention that $\text{SO}(2n,2n)$ acts on
$\widetilde{\mathbb{H}}^{n}$ on the left.

An \emph{almost product structure} on a smooth manifold $M$ is a
tensor field $P$ of type (1,1) on $M$, $P\neq\pm Id$, such that
$P^2=Id,$ where $Id$ is the identity tensor field of type $(1,1)$ on
$M$. The pair $(M,P)$ is called an \emph{almost product manifold}.
An \emph{almost para-complex manifold} is an almost product manifold
$(M,P)$ such that the two eigenbundles $T^+M$ and $T^-M$ associated
with the two eigenvalues $+1$ and $-1$ of $P$, respectively, have the
same rank. Equivalently, a splitting of the tangent bundle $TM$ of a
differentiable manifold $M$, into the Whitney sum of two subbundles
$T^±M$ of the same fiber dimension is called an \emph{almost
para-complex structure} on M.

An \emph{almost para-Hermitian structure} on a smooth manifold $M$
is a pair $(g,P)$, where $g$ is a pseudo-Riemannian metric on $M$
and $P$ is an almost product structure on $M$, which is compatible
with $g$, i.e. $P^{*}g =-g$. In this case, the triple $(M,g,P)$ is
called an \emph{almost para-Hermitian manifold}. Moreover, $(M,g,P)$
is said to be a \emph{para-Hermitian manifold} if $P$ is integrable,
i.e. if the Nijenhuis $N_P$ defined by
\[
         N_P(X,Y)=[PX, PY]-P[X,PY]
         -P[PX,Y]+[X,Y]
\]
vanishes. An \emph{almost complex structure} on a smooth manifold
$M$ is a tensor field $J$ of type $(1,1)$ on $M$ such that
$J^2=-Id$. The pair $(M,J)$ is called an \emph{almost complex
manifold}. We note that the dimension of an almost (para-)complex
manifold is necessarily even (see \cite{CFG,KN}).

An \emph{almost pseudo-Hermitian structure} on a smooth manifold $M$
is a pair $(g,J)$, where $g$ is a pseudo-Riemannian metric on $M$
and $J$ is an almost complex structure on $M$, which is compatible
with $g$, i.e. $J^{*}g =g$. In this case, the triple $(M,g,J)$ is
called an \emph{almost pseudo-Hermitian manifold}. Moreover,
$(M,g,J)$ is said to be \emph{pseudo-Hermitian} if $J$ is
integrable, i.e. if the Nijenhuis $N_J$ defined by
\[
         N_J(X,Y)=[JX, JY]-J[X,JY]
         -J[JX,Y]-[X,Y]
\]
vanishes.

An \emph{almost para-hypercomplex structure} on a smooth manifold
$M$ is a triple $H=(J_1,J_2,J_3)$ of $(1,1)$–-type tensor fields on
$M$ satisfying:
\begin{equation}\label{V25}
        J_\alpha^2=-\tau_\alpha {\rm Id},\ J_\alpha J_\beta=-J_\beta J_\alpha=\tau_\gamma J_\gamma,
\end{equation}
for any $\alpha\in\{1,2,3\}$ and for any even permutation
$(\alpha,\beta,\gamma)$ of $(1,2,3)$, where $\tau_1= \tau_2=-1=-\tau_3$.
In this case $(M,H)$ is said to be an \emph{almost para-hypercomplex
manifold}. We remark that from (\ref{V25}) it follows that $J_1$ and
$J_2$ are almost product structures on $M$, while $J_3=J_1J_2$ is an
almost complex structure on $M$. We note that the almost
para-hypercomplex structures have been introduced in geometry by P.
Libermann \cite{L} under the name of quaternionic structures of
second kind (\emph{structures presque quaternioniennes de
deuxi\`{e}me esp\`{e}ce}).

A semi-Riemannian metric $g$ on $(M,H)$ is said to be
\emph{compatible} or \emph{adapted} to the almost para-hypercomplex
structure $H=(J_{\alpha})_{\alpha=1,2,3}$ if it satisfies:
\[
         g(J_\alpha X,J_\alpha Y)=\tau_{\alpha}
         g(X,Y)
\]
for all vector fields $X$,$Y$ on $M$ and $\alpha\in\{1,2,3\}$.
Moreover, the pair $(g,H)$ is called an \emph{almost
para-hyperhermitian structure} on $M$ and the triple $(M,g,H)$ is
said to be an \emph{almost para-hyperhermitian manifold}. We note
that any almost para-hyperhermitian manifold is of dimension $4m,\
m\geq 1$, and any adapted metric is necessarily of neutral signature
$(2m,2m)$. If $\lbrace{J_1,J_2,J_3}\rbrace$ are parallel with respect
to the Levi-Civita connection of $g$, then the manifold is called
\emph{para-hyper-K\"{a}hler}.

An almost para-hypercomplex manifold $(M,H)$ is called a
\emph{para-hypercomplex manifold} if each $J_{\alpha}$,
$\alpha=1,2,3$, is integrable. In this case $H$ is said to be a
\emph{para-hypercomplex structure} on $M$. Moreover, if $g$ is a
semi-Riemannian metric adapted to the para-hypercomplex structure
$H$, then the pair $(g,H)$ is said to be a \emph{para-hyperhermitian
structure} on $M$ and $(M,g,H)$ is called a
\emph{para-hyperhermitian manifold}.

An \emph{almost paraquaternionic Hermitian manifold} is a triple
$(M,\sigma,g)$, where $M$ is a smooth manifold, $\sigma$ is an
almost paraquaternionic structure on $M$, \emph{i.e.} a rank
3-subbundle of $End(TM)$ which is locally spanned by an almost
para-hypercomplex structure $H=(J_{\alpha})_{\alpha=1,2,3}$ and $g$
is a compatible metric with respect to $H$. We remark that, if
$\{J_1,J_2,J_3\}$ and $\{J'_1,J'_2,J'_3\}$ are two canonical local
bases of $\sigma$ in $U$ and in another coordinate neighborhood $U'$
of $M$, then we have for all $x\in U\cap U'$
\begin{equation}\label{V14}
\left(J'_\alpha\right)_x=\sum_{\beta=1}^{3}s_{\alpha\beta}(x)\left(J'_\beta\right)_x,\
\alpha=1,2,3,
\end{equation}
where
$S_{UU'}(x)=\left(s_{\alpha\beta}(x)\right)_{\alpha,\beta=1,2,3}\in
\text{SO}(2,1)$, because $\{J_1,J_2,J_3\}$ and $\{J'_1,J'_2,J'_3\}$
satisfy the paraquaternionic identities (\ref{V25}).

If $(M,\sigma,g)$ is an almost paraquaternionic Hermitian manifold
such that the bundle $\sigma$ is preserved by the Levi-Civita
connection $\nabla$ of $g$, then $(M,\sigma,g)$ is said to be a
\emph{paraquaternionic K\"{a}hler manifold} \cite{GRM}.
Equivalently, we can write
\begin{equation}\label{pK}
\nabla J_\alpha=\tau_\beta\omega_\gamma\otimes
J_\beta-\tau_\gamma\omega_\beta\otimes J_\gamma,
\end{equation}
where $(\alpha,\beta,\gamma)$ is an even permutation of $(1,2,3)$
and $\omega_1,\omega_2,\omega_3$ are locally defined 1-forms. We
note that the prototype of paraquaternionic K\"{a}hler manifold is
the paraquaternionic projective space $P^n(\widetilde{\mathbb{H}})$
as described by Bla\v{z}i\'{c} \cite{BLZ}.

If the Riemannian curvature tensor $R$ is taken with the sign
convention
\[R(X,Y)Z = \nabla_{X}\nabla_{Y}Z- \nabla_{Y}\nabla_{X}Z -
\nabla_{[X,Y]}Z,\]  for all vector fields $X,Y,Z$ on $M$, then a
consequence of (\ref{pK}) is that $R$ satisfies
\begin{equation}\label{rj}
[R, J_\alpha]=\tau_\beta A_\gamma\otimes J_\beta-\tau_\gamma
A_\beta\otimes J_\gamma,
\end{equation}
for any even permutation $(\alpha,\beta,\gamma)$ of $(1,2,3)$, where
\[
A_\alpha=d\omega_\alpha+\tau_\alpha\omega_\beta\wedge \omega_\gamma.
\]

If we consider $\Omega_\alpha:=g(J_\alpha\cdot,\cdot)$ the
fundamental form associated with $J_\alpha$, $\alpha=1,2,3$, then we
have the following structure equations (see \cite{AC,BDM}):
\begin{equation}\label{V13}
d\omega_\alpha+\tau_\alpha\omega_\beta\wedge
\omega_\gamma=\tau_\alpha\nu\Omega_\alpha,
\end{equation}
for any even permutation $(\alpha,\beta,\gamma)$ of $(1,2,3)$, where
$\nu=\frac{Sc}{4n(n + 2)}$ is the reduced scalar curvature, $Sc$
being the scalar curvature defined as the trace of the Ricci tensor
$\rho$.

We recall that the main property of manifolds endowed with this kind
of structure is the following.

\begin{thm}\label{2.1} \cite{GRM}
Any paraquaternionic K\"{a}hler manifold $(M,\sigma,g)$ is an
Einstein space, provided that ${\rm dim} M >4$.
\end{thm}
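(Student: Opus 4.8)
The plan is to imitate the classical argument of Alekseevsky--Marchiafava for quaternionic Kähler manifolds, transplanting it to the para-setting by keeping careful track of the signs $\tau_1=\tau_2=-1=-\tau_3$. The starting point is relation (\ref{rj}), which expresses the failure of the curvature operator $R(X,Y)$ to commute with each $J_\alpha$ in terms of the 2-forms $A_\beta, A_\gamma$; by (\ref{V13}) these are $A_\alpha=\tau_\alpha\nu\,\Omega_\alpha$, so (\ref{rj}) becomes a clean identity relating $[R,J_\alpha]$ to $\nu$ and the fundamental forms $\Omega_\beta,\Omega_\gamma$. The idea is to contract this identity to get information about the Ricci tensor $\rho$ and show $\rho = \lambda g$ for a function $\lambda$, and then invoke the Schur-type argument (in dimension $>4$) to conclude $\lambda$ is constant.

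First I would take a local canonical basis $\{J_1,J_2,J_3\}$ and, for fixed $\alpha$, contract the tensorial identity (\ref{rj}) over an adapted orthonormal frame $\{e_i\}$. Writing $R(X,Y)$ as the curvature operator, one evaluates $\sum_i \varepsilon_i\, g\big(([R,J_\alpha](e_i,Y))J_\alpha e_i, Z\big)$ in two ways: on the one hand using the first Bianchi identity and the symmetries of $R$ to reduce the left-hand side to an expression built from $\rho$, $g$, and $\Omega_\alpha$; on the other hand using the right-hand side of (\ref{rj}) together with (\ref{V13}) to get an expression purely in terms of $\nu$ and the $\Omega$'s. Because the three structures $J_1,J_2,J_3$ anticommute and satisfy $J_\alpha J_\beta = \tau_\gamma J_\gamma$, summing the resulting three identities (over $\alpha=1,2,3$ with the appropriate $\tau$-weights) makes all the $\Omega$-terms and all the $J_\alpha\rho J_\alpha$-type terms collapse, leaving an equation of the form $\rho(X,Y)=\lambda\, g(X,Y)$ with $\lambda$ a universal multiple of the scalar curvature (hence of $\nu$), which identifies $(M,g)$ as Einstein pointwise.

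The main obstacle is bookkeeping: the para-quaternionic identities carry indefinite signs, the frame is pseudo-orthonormal so every contraction comes with an indicator $\varepsilon_i=g(e_i,e_i)=\pm1$, and the fundamental forms $\Omega_\alpha$ are skew while $\rho$ is symmetric, so one must be careful which contractions survive and which vanish. In particular one has to check that the "bad" terms $\rho(J_\alpha X, J_\alpha Y)$ combine, via $J_\alpha^2=-\tau_\alpha\,\mathrm{Id}$ and the compatibility $g(J_\alpha X,J_\alpha Y)=\tau_\alpha g(X,Y)$, into a multiple of $\rho(X,Y)$ and $g(X,Y)$ with coefficients that do not conspire to make the system degenerate. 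This is exactly where $\dim M>4$ (equivalently $n>1$) is used: for $n=1$ the dimension count that forces $\lambda$ constant via the differential Bianchi identity (the Schur lemma step) fails, and indeed the conclusion is false there.

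Finally, once $\rho=\lambda g$ pointwise, I would apply the contracted second Bianchi identity $\mathrm{div}\,\rho=\tfrac12\, d(Sc)$ in the standard way: it gives $(n-\tfrac12)\,d\lambda=0$ up to a nonzero numerical factor when $\dim M = 4n > 4$, hence $\lambda$ is locally, and so globally (on a connected $M$), constant. This completes the proof that $(M,\sigma,g)$ is Einstein. I would remark that the locally defined 1-forms $\omega_\alpha$ and the transition functions (\ref{V14}) cause no trouble: the identity $\rho=\lambda g$ is manifestly independent of the choice of canonical basis, so it patches to a global statement even though the intermediate computations are carried out in a fixed trivialization.
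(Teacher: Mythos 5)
Your overall skeleton---contract (\ref{rj}), use the first Bianchi identity, sum over $\alpha$, and finish with a Schur-type argument---is indeed the shape of the argument in \cite{GRM}, which is where the paper takes this statement from (the paper itself gives no proof). However, your plan is circular at its very first step: you feed the structure equation (\ref{V13}), i.e.\ $A_\alpha=\tau_\alpha\nu\,\Omega_\alpha$ with $\nu=Sc/4n(n+2)$, into (\ref{rj}). That proportionality is not an a priori fact about paraquaternionic K\"ahler manifolds; it is essentially the content of the theorem. Indeed, contracting (\ref{rj}) against $J_\gamma$ shows that $A_\beta$ is a universal multiple of the Ricci $2$-form $\rho_\beta$, so (\ref{V13}) is equivalent to (\ref{roi}), and via the Bianchi-derived relation (\ref{ro}) (valid for $n>1$) this is equivalent to $\rho=\lambda g$ --- precisely what you are trying to prove. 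The paper's own logical order reflects this: (\ref{roi}) is \emph{deduced from} Theorem \ref{2.1}, not used to prove it. A non-circular proof must start from (\ref{rj}) with the $A_\alpha$ treated as unknown curvature $2$-forms of the induced connection on $\sigma$, and establish $A_\alpha\propto\Omega_\alpha$ and $\rho\propto g$ simultaneously by the double contractions and first-Bianchi manipulations you only allude to; that is the actual work in \cite{GRM} (and in its quaternionic prototype), and your plan skips it.

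A second, smaller point: you locate the hypothesis $\dim M>4$ in the Schur step. Schur's lemma holds in every dimension $\geq 3$, so it would apply perfectly well on a $4$-dimensional manifold once $\rho=\lambda g$ is known pointwise. What actually fails for $n=1$ is the pointwise algebra: the relation (\ref{ro}) between $\rho$ and the Ricci $2$-forms is valid only for $n>1$, and in dimension $4$ the curvature identities coming from (\ref{rj}) simply do not force the Ricci tensor to be a multiple of $g$. So $n>1$ is needed to obtain the pointwise Einstein condition; the constancy of $\lambda$ afterwards is routine in all dimensions under consideration.
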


Let $(M,\sigma,g)$ be a $4n$-dimensional paraquaternionic K\"{a}hler
manifold. Then the Ricci 2-forms of the Levi-Civita
connection of $g$ are defined as (see \cite{IMZ}):
\[\rho_{\alpha}(X,Y)=- \frac{\tau_\alpha}{2}\text{Trace}(Z\rightarrow J_{\alpha}R(X,Y)Z),
\ \alpha=1,2,3,\] and for $n>1$ it follows
\begin{equation}\label{ro}
      \rho(X,Y)=\frac{n+2}{n}\rho_{\alpha}(X,J_{\alpha}Y),\
      \alpha=1,2,3.
\end{equation}
Using now Theorem \ref{2.1} we obtain the following relations
\begin{eqnarray}\label{roi}
      &&\rho_{\alpha}(X,Y)=-\tau_\alpha\frac{Sc}{4(n+2)}g(X,J_{\alpha}Y),\
      \alpha=1,2,3.
\end{eqnarray}

On the other hand, from (\ref{rj}) and (\ref{ro}) and taking account
of Theorem \ref{2.1}, we obtain that the curvature tensor of a
$4n$-dimensional paraquaternionic K\"{a}hler manifold ($n>1$),
satisfies (see \cite{GRM}):
\begin{eqnarray}\label{ror}
      &&R(X,Y,J_{1}Z,J_{1}T)+R(X,Y,Z,T)\nonumber\\
       &&=\frac{Sc}{4n(n+2)}\{g(X,J_{3}Y)g(Z,J_{3}T)-g(X,J_{2}Y)g(Z,J_{2}T)\}\nonumber\\
      &&R(X,Y,J_{2}Z,J_{2}T)+R(X,Y,Z,T)\nonumber\\
      &&=\frac{Sc}{4n(n+2)}\{g(X,J_{3}Y)g(Z,J_{3}T)-g(X,J_{1}Y)g(Z,J_{1}T)\}\\
      &&R(X,Y,J_{3}Z,J_{3}T)-R(X,Y,Z,T)\nonumber\\
      &&=\frac{Sc}{4n(n+2)}\{g(X,J_{2}Y)g(Z,J_{2}T)+g(X,J_{1}Y)g(Z,J_{1}T)\},\nonumber
\end{eqnarray}
for all vector fields $X,Y,Z$ and $T$ on $M$, where
$R(X,Y,Z,T)=g(R(X,Y)Z,T)$.

We consider now the general case of a $4n$-dimensional smooth
manifold $M$ endowed with an almost paraquaternionic structure
$\sigma$ and with a \emph{paraquaternionic connection} $\nabla$,
\emph{i.e.} a linear connection which preserves $\sigma$, and
following \cite{IMZ} we recall some basic facts concerning the
twistor and reflector spaces of $M$, which will be useful in the
next Section.

Let $p\in M$. Any linear frame $u$ on $T_{p}M$ can be considered as
an isomorphism $u:\mathbb{R}^{4n}\rightarrow T_{p}M$. Taking such a
frame $u$ we can define a subspace of of the space of the all
endomorphisms of $T_{p}M$ by  $u(\text{sp}(1,\mathbb{R}))u^{-1}$.
This subset is a paraquaternionic structure and we define $P(M)$ to
be the set of all linear frames $u$ which satisfy
$u(\text{sp}(1,\mathbb{R}))u^{-1}=\sigma$, where
$\text{sp}(1,\mathbb{R})=\text{Span}
\{J^{0}_{1},J^{0}_{2},J^{0}_{3}\}$ is the Lie algebra of
$\text{Sp}(1,\mathbb{R})$. It is clear that $P(M)$ is the principal
frame bundle of $M$ with structure group
$\text{GL}(n,\widetilde{\mathbb{H}})\text{Sp}(1,\mathbb{R})$, where
\[\text{GL}(n,\widetilde{\mathbb{H}})=\{A\in
\text{GL}(4n,\mathbb{R})|A(\text{sp}(1,\mathbb{R}))A^{-1}=\text{sp}(1,\mathbb{R})\}.\]

We denote by $\pi:P(M)\rightarrow M$ the natural projection and
remark that the Lie algebra of $\text{GL}(n,\widetilde{\mathbb{H}})$
is
\[\text{gl}(n,\widetilde{\mathbb{H}})=\{A\in \text{gl}(4n,\mathbb{R})|AB=BA\
\text{for all } B\in \text{sp}(1,\mathbb{R})\}.\]

We also denote by $(\ ,\ )$ the inner product in
$\text{gl}(4n,\mathbb{R})$ given by $(A,B) = \text{Trace}(AB^{t})$,
for  $A, B \in \text{gl}(4n, \mathbb{R}).$

We split now the curvature of $\nabla$ into
$\text{gl}(n,\widetilde{\mathbb{H}})$-valued part $R'$ and
$\text{sp}(1,\mathbb{R})$-valued part $R''$ following the classical scheme
(see e.g. \cite{Be}). We denote the splitting of the
$\text{gl}(n,\widetilde{\mathbb{H}})\oplus \text{sp}(1,\mathbb{R})$-valued
curvature 2-form $\Omega$ on $P(M)$ according to the splitting of
the curvature $R$, by
\begin{equation}\label{om1}
\Omega= \Omega'+ \Omega'',
\end{equation}
where $\Omega'$ is a $\text{gl}(n,\widetilde{\mathbb{H}})$-valued
2-form and $\Omega''$ is a $\text{sp}(1,\mathbb{R})$-valued form.
Explicitly,
\begin{equation}\label{om2}
\Omega''=\Omega''_{1}J^{0}_{1}+\Omega''_{2}J^{0}_{2}+\Omega''_{3}J^{0}_{3},
\end{equation}
where $\Omega''_{\alpha},\ \alpha=1,2,3$ are 2-forms. If $\xi, \eta
\in \mathbb{R}^{4n}$, then the 2-forms $\Omega''_{\alpha},\
\alpha=1,2,3$, are given by

\begin{equation}\label{omei}
      \Omega''_{\alpha}(B(\xi),B(\eta))=\frac{1}{2n}\rho_{\alpha}(X,Y)=-\tau_\alpha\frac{Sc}{8n(n+2)}g(X,J_{\alpha}Y),
\end{equation}
where $X=u(\xi),\ Y=u(\eta)$ (see \cite{IMZ}).




For each $u \in P(M)$ we consider two linear isomorphism $j^{+}(u)$ and $j^{-}(u)$
on $T_{\pi(u)}M$ defined by:
\[j^{+}(u) = uJ^{0}_{1}u^{-1},\ \
j^{-}(u) = uJ^{0}_{3}u^{-1}.\]

It is easy to see that
\[(j^{-}(u))^{2} = -Id ,\ \ (j^{+}(u))^{2} = Id\]
and
\[g(j^{-}(u)X,j^{-}(u)Y) =
g(X,Y),\ \ g(j^{+}(u)X,j^{+}(u)Y) = -g(X,Y),\] for all $X,Y \in
T_{\pi(u)}M$.

As in \cite{IMZ}, for each $p\in M$ we consider
\[Z^{\pm}_{p}(M)=\{j^{\pm}(u)| u \in P(M), \pi(u)=p \}\]
and we define the \emph{twistor space} $Z^{-}$ of $M$ and the
\emph{reflector space} $Z^{+}$ of $M$, by setting
\[Z^{\pm}=Z^{\pm}(M)=\bigcup _{p\in M}Z^{\pm}_{p}(M).\]

Then the twistor space $Z^{-}(M)$ is the unit pseudo-sphere bundle with fibre the 2-sheeted hyperboloid
$S^{2}_{1}(-1)=\{(x,y,z)\in\mathbb{R}^{3}|x^{2}+y^{2}-z^{2}=-1\}$ and the reflector space $Z^{+}(M)$ is the unit pseudo-sphere bundle with fibre the 1-sheeted hyperboloid $S^{2}_{1}(1)=\{(x,y,z)\in\mathbb{R}^{3}|x^{2}+y^{2}-z^{2}=1\}$.

We denote by $A^{*}$ (resp. $B(\xi)$) the fundamental vector field
(resp. the standard horizontal vector field) on $P(M)$, the
principal frame bundle of $M$, corresponding to $A\in
\text{gl}(n,\widetilde{\mathbb{H}})\oplus \text{sp}(1,\mathbb{R})$ (resp.
$\xi\in\mathbb{R}^{4n}$). Let $u \in P(M)$ and $Q_u$ be the
horizontal subspace of the tangent space $T_{u}P(M)$ induced by the
connection $\nabla$ on $M$ (see \cite{KN}). As in \cite{IMZ} we have
the decompositions
\[T_{u}P(M) = (h_{i})^{*}_{u}\oplus (m_{i})^{*}_{u}\oplus Q_u,\
i=1,3\] and the following isomorphisms
\[j^{-}_{\ast u}|_{(m_{3})^{*}_{u}\oplus Q_u}:(m_{3})^{*}_{u}\oplus Q_u \rightarrow T_{j^{-}(u)}Z^{-},\]
\[j^{+}_{\ast u}|_{(m_{1})^{*}_{u}\oplus Q_u}:(m_{1})^{*}_{u}\oplus Q_u \rightarrow T_{j^{+}(u)}Z^{+}\]
where \[h_{3} = \{A\in \text{gl}(n,\widetilde{\mathbb{H}})\oplus
\text{sp}(1,\mathbb{R})| AJ^{0}_{3} = J^{0}_{3}A\},\] \[h_{1} = \{A\in
\text{gl}(n,\widetilde{\mathbb{H}})\oplus \text{sp}(1,\mathbb{R})| AJ^{0}_{1} =
J^{0}_{1}A\},\]
\begin{equation}\label{m3}
m_{3} = \{A \in \text{gl}(n,\widetilde{\mathbb{H}})\oplus
\text{sp}(1,\mathbb{R})| AJ^{0}_{3} = -J^{0}_{3}A\}=\text{Span}\{J^{0}_{1}, J^{0}_{2}\},
\end{equation}
\begin{equation}\label{m1}
m_{1}= \{A \in \text{gl}(n,\widetilde{\mathbb{H}})\oplus
\text{sp}(1,\mathbb{R})| AJ^{0}_{1} = -J^{0}_{1}A\}=\text{Span}\{J^{0}_{2}, J^{0}_{3}\},
\end{equation}
and \[(h_{i})^{*}_{u}= \{A^{*}_{u}| A\in h_{i}\},\ (m_{i})^{*}_{u}=
\{A^{*}_{u}| A\in m_{i}\},\ i=1,3.\]

Now, we can define two almost complex structures $I_1$ and $I_2$
on $Z^{-}$ by (see \cite{IMZ})
\begin{eqnarray}\label{ti}
       &&I_1(j^{-}_{\ast u}A^{*})=j^{-}_{\ast u}(J^{0}_{3}A)^{*},\nonumber\\
       &&I_2(j^{-}_{\ast u}A^{*})=-j^{-}_{\ast u}(J^{0}_{3}A)^{*}\\
       &&I_i(j^{-}_{\ast u}B(\xi))=j^{-}_{\ast u}B(J^{0}_{3}\xi),\ i=1,2,\nonumber
\end{eqnarray}
for $u \in P(M),\ A\in m_{3},\ \xi\in \mathbb{R}^{4n}$.

Similarly, it can be defined two almost paracomplex structures $P_1$
and $P_2$ on $Z^{+}$ by (see \cite{IMZ})
\begin{eqnarray}\label{tp}
       &&P_1(j^{+}_{\ast u}A^{*})=j^{+}_{\ast u}(J^{0}_{1}A)^{*},\nonumber\\
       &&P_2(j^{+}_{\ast u}A^{*})=-j^{+}_{\ast u}(J^{0}_{1}A)^{*}\\
       &&P_i(j^{+}_{\ast u}B(\xi))=j^{+}_{\ast u}B(J^{0}_{1}\xi),\ i=1,2,\nonumber
\end{eqnarray}
for $u \in P(M),\ A\in m_{1},\ \xi\in \mathbb{R}^{4n}$.

We remark that the almost complex structures defined above  were also defined and investigated in \cite{BDM} on paraquaternionic K\"{a}hler manifolds, the authors proving that the
almost complex structure $I_2$ is never integrable while $I_1$ is
always integrable. Moreover, we note that in \cite{IMZ} the authors
found that the para-complex structures $P_2$ is never integrable on
reflector space, while $P_1$ is always integrable.

\section{Curvature properties of $(Z^{-}, I_i, h_t)$ and $(Z^{+}, P_i, h_t)$, $i = 1, 2$ on paraquaternionic K\"{a}hler
manifolds}

Let $(M,\sigma,g)$ be a $4n$-dimensional paraquaternionic K\"{a}hler
manifold. Since the Levi-Civita connection $\nabla$ of $g$ is a
paraquaternionic connection on $M$, it follows that all the
considerations in the last part of the above sections remain valid
if the manifold is endowed with a paraquaternionic K\"{a}hler
structure. Then, as in \cite{AC,BDM,IZ}, we can define a natural
1-parameter family of pseudo-Riemannian metrics $h_{t}$, $t\neq 0$,
on $Z^{-}$  by
\begin{eqnarray}\label{m-}
&h_{t}(j^{-}_{\ast u}A^{*},j^{-}_{\ast u}B^{*})=t(A, B),\nonumber\\
&h_{t}(j^{-}_{\ast u}A^{*},j^{-}_{\ast u}B(\xi))=0,\\
&h_{t}(j^{-}_{\ast u}B(\xi),j^{-}_{\ast u}B(\eta))=g(X,Y),\nonumber
\end{eqnarray}
for $u \in P(M),\ A,B\in m_{3},\ \xi,\eta\in \mathbb{R}^{4n}$ and
$X=u(\xi),\ Y=u(\eta)$, and similarly on $Z^{+}$:
\begin{eqnarray}\label{m+}
&h_{t}(j^{+}_{\ast u}A^{*},j^{+}_{\ast u}B^{*})=t(A, B),\nonumber\\
&h_{t}(j^{+}_{\ast u}A^{*},j^{+}_{\ast u}B(\xi))=0,\\
&h_{t}(j^{+}_{\ast u}B(\xi),j^{+}_{\ast u}B(\eta))=g(X,Y),\nonumber
\end{eqnarray}
for $u \in P(M),\ A,B\in m_{1},\ \xi,\eta\in \mathbb{R}^{4n}$ and
$X=u(\xi),\ Y=u(\eta)$.

It is easy now to see that $(I_i, h_t)$, $i = 1, 2$ (resp. $(P_i,
h_t)$, $i = 1, 2$) determine two families of almost pseudo-Hermitian
structures on $Z^{-}$ (resp. two families of almost para-Hermitian
structures on $Z^{+}$).

The computations performed in \cite{Sk} for the metric twistor space
over Riemannian manifold $M$ can be adapted in our cases, using
(\ref{ti}) and (\ref{m-}) for the twistor space and respectively
(\ref{tp}) and (\ref{m+}) for the reflector space. We obtain for the
curvature tensors $K^{\pm}$, where $K^{-}$ is the curvature tensor on
the twistor space and $K^{+}$ is the curvature tensor on the
reflector space, the following expressions:
\begin{eqnarray}\label{k}
&K^{\pm}(j^{\pm}_{\ast u}A^{*},j^{\pm}_{\ast u}B^{*},j^{\pm}_{\ast u}C^{*},j^{\pm}_{\ast u}D^{*})=-t([A, B],[C, D])\nonumber\\
&K^{\pm}(j^{\pm}_{\ast u}A^{*},j^{\pm}_{\ast u}B^{*},j^{\pm}_{\ast u}C^{*},j^{\pm}_{\ast u}B(\xi))=0\nonumber\\
&K^{\pm}(j^{\pm}_{\ast u}A^{*},j^{\pm}_{\ast u}B(\xi),j^{\pm}_{\ast u}B^{*},j^{\pm}_{\ast u}B(\eta))
=\frac{t}{2}([A,B],\Omega(B(\xi),B(\eta))(u))-\nonumber\\
&-\frac{t^{2}}{4}\varepsilon_{i}(B,\Omega(B(\xi),B(e_i))(u))(A,\Omega(B(\eta),B(e_i))(u))\\
&K^{\pm}(j^{\pm}_{\ast u}B(\xi),j^{\pm}_{\ast
u}B(\eta),j^{\pm}_{\ast u}B(\zeta),j^{\pm}_{\ast u}A^{*})
=\frac{t}{2}(A,B(\zeta)_{u}(\Omega(B(\xi),B(\eta))))\nonumber\\
&K^{\pm}(j^{\pm}_{\ast u}B(\xi),j^{\pm}_{\ast
u}B(\eta),j^{\pm}_{\ast u}B(\zeta),j^{\pm}_{\ast u}B(\tau))=
R(u(\xi),u(\eta),u(\zeta),u(\tau))-\nonumber\\
&-\frac{t}{4}\{(\Omega_{m_{\pm}}(B(\eta),B(\zeta))(u),\Omega_{m_{\pm}}(B(\xi),B(\tau))(u))-\nonumber\\
&-(\Omega_{m_{\pm}}(B(\xi),B(\zeta))(u),\Omega_{m_{\pm}}(B(\eta),B(\tau))(u))-\nonumber\\
&-2(\Omega_{m_{\pm}}(B(\xi),B(\eta))(u),\Omega_{m_{\pm}}(B(\zeta),B(\tau))(u))\}\nonumber
\end{eqnarray}
for $u \in P(M),\ A,B,C,D\in m_{\pm}$, where $m_{-}=m_{3}$,
$m_{+}=m_{1}$, $\xi,\eta,\zeta,\tau\in \mathbb{R}^{4n}$ and
$\Omega_{m_{\pm}}$ denotes the $m_{\pm}$-component of $\Omega$ .

Using now (\ref{om1})--(\ref{m1})  in (\ref{k}) we derive
\begin{eqnarray}\label{kp-}
&K^{-}(j^{-}_{\ast u}A^{*},j^{-}_{\ast u}B^{*},j^{-}_{\ast u}C^{*},j^{-}_{\ast u}D^{*})
=\frac{t}{n}(J^{0}_{3}A, B)(J^{0}_{3}C, D)\nonumber\\
&K^{-}(j^{-}_{\ast u}A^{*},j^{-}_{\ast u}B^{*},j^{-}_{\ast u}C^{*},j^{-}_{\ast u}B(\xi))=0\nonumber\\
&K^{-}(j^{-}_{\ast u}A^{*},j^{-}_{\ast u}B(\xi),j^{-}_{\ast u}B^{*},j^{-}_{\ast u}B(\eta))= \nonumber\\
&=(-\frac{t^{2}Sc^{2}}{64n(n+2)^{2}}+\frac{t Sc}{8n(n+2)})(J^{0}_{3}A,
B)g(J_{3}X,Y)
+\frac{t^{2}Sc^{2}}{64n(n+2)^{2}}(A,B)g(X,Y)\nonumber\\
&K^{-}(j^{-}_{\ast u}B(\xi),j^{-}_{\ast u}B(\eta),j^{-}_{\ast u}B(\zeta),j^{-}_{\ast u}A^{*})=0\\
&K^{-}(j^{-}_{\ast u}B(\xi),j^{-}_{\ast u}B(\eta),j^{-}_{\ast u}B(\zeta),j^{-}_{\ast u}B(\tau))=R(X,Y,Z,T)-\nonumber\\
&-\frac{tSc^{2}}{64n(n+2)^{2}}\{g(Y,J_{1}Z)g(X,J_{1}T)+g(Y,J_{2}Z)g(X,J_{2}T)-\nonumber\\
&-g(X,J_{1}Z)g(Y,J_{1}T)-g(X,J_{2}Z)g(Y,J_{2}T)-\nonumber\\
&-2g(X,J_{1}Y)g(Z,J_{1}T)-2g(X,J_{2}Y)g(Z,J_{2}T)\}\nonumber
\end{eqnarray}
for $u \in P(M),\ A,B,C,D\in m_3,\ \xi,\eta,\zeta,\tau\in
\mathbb{R}^{4n}$ and $u(\xi)=X,\ u(\eta)=Y,\ u(\zeta)=Z,\ u(\tau)=T$
 and
\begin{eqnarray}\label{kp+}
&K^{+}(j^{+}_{\ast u}A^{*},j^{+}_{\ast u}B^{*},j^{+}_{\ast u}C^{*},j^{+}_{\ast u}D^{*})
=\frac{t}{n}(J^{0}_{1}A, B)(J^{0}_{1}C, D)\nonumber\\
&K^{+}(j^{+}_{\ast u}A^{*},j^{+}_{\ast u}B^{*},j^{+}_{\ast u}C^{*},j^{+}_{\ast u}B(\xi))=0\nonumber\\
&K^{+}(j^{+}_{\ast u}A^{*},j^{+}_{\ast u}B(\xi),j^{+}_{\ast u}B^{*},j^{+}_{\ast u}B(\eta))=\nonumber\\
&=(-\frac{t^{2}Sc^{2}}{64n(n+2)^{2}}-\frac{t Sc}{8n(n+2)})(J^{0}_{1}A,
B)g(J_{1}X,Y)-\frac{t^{2}Sc^{2}}{64n(n+2)^{2}}(A,B)g(X,Y)\\
&K^{+}(j^{+}_{\ast u}B(\xi),j^{+}_{\ast u}B(\eta),j^{+}_{\ast u}B(\zeta),j^{+}_{\ast u}A^{*})=0\nonumber\\
&K^{+}(j^{+}_{\ast u}B(\xi),j^{+}_{\ast u}B(\eta),j^{+}_{\ast u}B(\zeta),j^{+}_{\ast u}B(\tau))=R(X,Y,Z,T)-\nonumber\\
&-\frac{t Sc^{2}}{64n(n+2)^{2}}\{-g(Y,J_{2}Z)g(X,J_{2}T)+g(Y,J_{3}Z)g(X,J_{3}T)+\nonumber\\
&+g(X,J_{2}Z)g(Y,J_{2}T)-g(X,J_{3}Z)g(Y,J_{3}T)+\nonumber\\
&+2g(X,J_{2}Y)g(Z,J_{2}T)-2g(X,J_{3}Y)g(Z,J_{3}T)\}\nonumber
\end{eqnarray}
for $u \in P(M),\ A,B,C,D\in m_1,\ \xi,\eta,\zeta,\tau\in
\mathbb{R}^{4n}$ and $u(\xi)=X,\ u(\eta)=Y,\ u(\zeta)=Z,\
u(\tau)=T$.

We recall now that the *-Ricci tensor of a $2n$-dimensional almost
pseudo-Hermitian manifold $(M,g,J)$ is defined by \[\rho^*(X,
Y)=\sum_{i=1}^{2n}\varepsilon_{i}R(X,E_{i}, JY, JE_{i}),\] where $R$
denotes the curvature of the metric $g$, $\{E_{1}, ..., E_{2n}\}$ is
a pseudo-orthonormal basis at an arbitrary point $p$ and $X, Y$ are
tangent vectors at $ p$. If the *-Ricci tensor is scalar multiple of
the metric then the manifold is said to be \emph{*-Einstein}.

On the other hand, we note that A. Gray introduced in \cite{G} three
basic classes $AH1,\ AH2,\ AH3$ of almost Hermitian manifolds, whose
curvature tensors resemble that of a K\"{a}hler manifold. They are
defined by the following curvature identities:
\[AH1: R(X,Y,Z,T)=R(X,Y,JZ,JT),\]
\[AH2: R(X,Y,Z,T)=R(JX,JY,Z,T)+R(JX,Y,JZ,T)+R(JX,Y,Z,JT),\]
\[AH3: R(X,Y,Z,T)=R(JX, JY, JZ, JT),\]
where $R$ is the curvature tensor of the manifold. It is easy to see
that \[AH1 \subset AH2 \subset AH3.\]

By analogy, one says that an almost para-Hermitian manifold
satisfies the para-Gray identities if
\[APH1: R(X,Y,Z,T)=-R(X,Y,JZ,JT),\]
\[APH2: R(X,Y,Z,T)=-R(JX,JY,Z,T)-R(JX,Y,JZ,T)-R(JX,Y,Z,JT),\]
\[APH3: R(X,Y,Z,T)=R(JX, JY, JZ, JT),\]
where $R$ is the curvature tensor of the manifold. We note that
para-Gray-like identities were considered in \cite{BV,CL}) and it
can be easily checked that \[APH1 \subset APH2 \subset APH3.\]

We can give now the main curvature properties of  $(Z^{-}, I_{i},
h_{t})$ and $(Z^{+}, P_{i}, h_{t})$, $i = 1, 2$, in the following
two theorems.

\begin{thm}\label{TT}
Let $(M,\sigma,g)$ be a $4n$-dimensional paraquaternionic K\"{a}hler
manifold and $(Z^{-}, I_{i}, h_{t})$, $i=1,2$  the twistor spaces
associated. Then:
\\$(i)$ The manifolds $(Z^{-}, I_{i}, h_{t}),\
i=1,2$, belong always to $AH2$ and $AH3$ and are with
pseudo-Hermitian Ricci tensor and with pseudo-Hermitian *-Ricci
tensor;
\\$(ii)$ The manifolds  $(Z^{-}, I_{1}, h_{t})$ belong to $AH1$ iff $Sc = 0$ or $Sc =
\frac{4(n + 2)}{t}$;
\\$(iii)$ The manifolds  $(Z^{-}, I_{2}, h_{t})$ belong to $AH1$ iff $Sc = 0$;
\\$(iv)$ The manifolds $(Z^{-}, I_{i}, h_{t}), i=1,2$  are Einstein iff
\[ Sc = \frac{ 4(n + 2) }{ t }\ \text{\rm or}\ Sc = \frac{4(n+2)}{ (n+1)t
};\]
\\$(v)$ The manifolds $(Z^{-}, I_{1}, h_{t})$ are *-Einstein iff
\[ Sc = \frac{4(n + 2)}{ t}\ \text{\rm or}\ Sc = -\frac{4(n+2)}{ nt};\]
\\$(vi)$ The manifolds $(Z^{-}, I_{2}, h_{t})$ are *-Einstein iff
\[ Sc=\frac{2(n+2)}{ (n-1)t }[3n-1-\sqrt{ 9n^{2}-10n+5 }] \]
or
\[ Sc=\frac{2(n+2)}{ (n-1)t }[3n-1+\sqrt{ 9n^{2}-10n+5 }]. \]
\end{thm}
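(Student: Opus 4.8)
The plan is to work entirely inside the $I_{i}$-invariant splitting $TZ^{-}=\mathcal{V}\oplus\mathcal{H}$, where $\mathcal{V}=j^{-}_{\ast u}(m_{3})^{*}_{u}$ is the rank-$2$ vertical distribution and $\mathcal{H}=j^{-}_{\ast u}Q_{u}$ the rank-$4n$ horizontal one. Since $J^{0}_{3}$ maps $m_{3}=\mathrm{Span}\{J^{0}_{1},J^{0}_{2}\}$ into itself and $\mathbb{R}^{4n}$ into itself, (\ref{ti}) shows each $I_{i}$ preserves both $\mathcal{V}$ and $\mathcal{H}$; on $\mathcal{H}$ it is the horizontal lift of the almost complex structure $J_{3}:=j^{-}(u)=uJ^{0}_{3}u^{-1}$ of $M$, and on $\mathcal{V}$ it acts by $\pm$ left multiplication by $J^{0}_{3}$ ($+$ for $I_{1}$, $-$ for $I_{2}$). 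Hence any Gray or para-Gray identity needs only be checked on the block types $\mathcal{V}^{4},\mathcal{V}^{3}\mathcal{H},\mathcal{V}^{2}\mathcal{H}^{2},\mathcal{V}\mathcal{H}^{3},\mathcal{H}^{4}$, and on each block the substitution of $I_{i}$ is governed algebraically by $J^{0}_{1}J^{0}_{3}=J^{0}_{2}$, $J^{0}_{2}J^{0}_{3}=-J^{0}_{1}$, $(J^{0}_{3})^{2}=-\mathrm{Id}$, together with skew-adjointness of each $J_{\alpha}$ for $g$ (equivalently antisymmetry of $\Omega_{\alpha}$) and of $J^{0}_{3}$ for $(\cdot,\cdot)$ on $m_{3}$.

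For $(i)$ I would verify the $AH2$ and $AH3$ identities directly from (\ref{kp-}), block by block. On $\mathcal{H}^{4}$ they reduce, after commuting the $J_{3}$'s through $R$, to the paraquaternionic K\"{a}hler relations (\ref{ror}) with $\alpha=3$, the point being that the quadratic correction term in the last line of (\ref{kp-}) is built only from $g$ and $\Omega_{1},\Omega_{2}$, which $J_{3}$ permutes among themselves; on $\mathcal{V}^{4}$ they follow from line $1$ of (\ref{kp-}) and the invariance $(J^{0}_{3}\,\cdot,J^{0}_{3}\,\cdot)=(\cdot,\cdot)$; the mixed blocks reduce to the manifestly $J_{3}$-invariant bilinear forms $(J^{0}_{3}A,B)\,g(J_{3}X,Y)$ and $(A,B)\,g(X,Y)$ of line $3$. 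Once $AH2$ holds, the pseudo-Hermitian symmetry of the Ricci and $*$-Ricci tensors, $\rho^{Z^{-}}(I_{i}\cdot,I_{i}\cdot)=\rho^{Z^{-}}$ and $\rho^{*Z^{-}}(I_{i}\cdot,I_{i}\cdot)=\rho^{*Z^{-}}$, follows either from the general property of Gray's class $AH2$ or, to stay self-contained, by tracing the $AH2$ identity just established over the adapted frame introduced below.

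For $(ii)$--$(iii)$, $AH2$ and $AH3$ being automatic, $AH1$ imposes conditions only from the $\mathcal{H}^{4}$ block and the mixed ($\mathcal{V}\mathcal{H}\mathcal{V}\mathcal{H}$) block. On $\mathcal{H}^{4}$, rewriting $R(X,Y,J_{3}Z,J_{3}T)$ via the third relation of (\ref{ror}) and substituting $Z\mapsto J_{3}Z,\ T\mapsto J_{3}T$ in the $\tfrac{tSc^{2}}{64n(n+2)^{2}}$-term, the identity $K^{-}(W_{1},W_{2},W_{3},W_{4})=K^{-}(W_{1},W_{2},I_{i}W_{3},I_{i}W_{4})$ collapses — the same way for $i=1$ and $i=2$, since $I_{i}$ acts identically on $\mathcal{H}$ — to $Sc\bigl(tSc-4(n+2)\bigr)=0$, i.e. $Sc=0$ or $Sc=\tfrac{4(n+2)}{t}$. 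On the mixed block one uses line $3$ of (\ref{kp-}) together with $(J^{0}_{3}A,J^{0}_{3}B)=(A,B)$ and $g(J_{3}X,J_{3}Y)=g(X,Y)$; here the sign distinguishing $I_{1}$ from $I_{2}$ enters, and the condition becomes $Sc=0$ or $Sc=\tfrac{4(n+2)}{t}$ for $I_{1}$ but only $Sc=0$ for $I_{2}$. Intersecting, $(Z^{-},I_{1},h_{t})\in AH1$ iff $Sc=0$ or $Sc=\tfrac{4(n+2)}{t}$, and $(Z^{-},I_{2},h_{t})\in AH1$ iff $Sc=0$, which is exactly $(ii)$ and $(iii)$.

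For $(iv)$--$(vi)$ I would compute $\rho^{Z^{-}}$ and $\rho^{*Z^{-}}$ by tracing (\ref{kp-}) over a frame adapted to $\mathcal{V}\oplus\mathcal{H}$: two vectors $j^{-}_{\ast u}E^{*}_{a}$ with $t(E_{a},E_{b})=\pm\delta_{ab}$ spanning $\mathcal{V}$, and $4n$ vectors $j^{-}_{\ast u}B(e_{k})$ with $\{u(e_{k})\}$ pseudo-orthonormal on $M$ spanning $\mathcal{H}$. Using that $M$ is Einstein by Theorem \ref{2.1} (so $\rho=\tfrac{Sc}{4n}g$), the relations (\ref{roi}) and (\ref{ror}) to collapse sums such as $\sum_{k}\varepsilon_{k}R(X,E_{k},J_{\alpha}Y,J_{\alpha}E_{k})$, and skew-adjointness of $J^{0}_{3}$ to kill terms like $\sum_{a}\varepsilon_{a}(J^{0}_{3}E_{a},E_{a})$, one finds that $\rho^{Z^{-}}$ and each $\rho^{*Z^{-}}$ are block-diagonal: a multiple $\lambda_{\mathcal{V}}(Sc,t,n)$ of $h_{t}$ on $\mathcal{V}$ and a multiple $\lambda_{\mathcal{H}}(Sc,t,n)$ on $\mathcal{H}$, the mixed block vanishing by lines $2$ and $4$ of (\ref{kp-}) and the curvature symmetries. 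The Einstein, resp. $*$-Einstein, condition is then precisely $\lambda_{\mathcal{V}}=\lambda_{\mathcal{H}}$, a quadratic in $Sc$; for the Ricci tensor its discriminant is a perfect square (proportional to $n^{2}(n+2)^{2}$), giving the rational roots of $(iv)$, and for the $*$-Ricci of $I_{1}$ it again factors over $\mathbb{Q}$ (roots as in $(v)$), whereas for $I_{2}$ the sign reversal on $\mathcal{V}$ alters the $\mathcal{H}$-contribution to $\rho^{*}$ so that the discriminant becomes proportional to $9n^{2}-10n+5$, not a perfect square, producing the irrational pair of $(vi)$. The only genuine obstacle throughout is the sign bookkeeping — carrying the factors $\tau_{\alpha}$, the signature signs of the adapted frames (in particular the sign of $(\cdot,\cdot)$ on $m_{3}$ and on $\mathrm{Span}\{J^{0}_{3}\}$), through the repeated applications of (\ref{ror}), (\ref{roi}) and the algebraic relations among the $J_{\alpha}$; there is no conceptual difficulty beyond (\ref{kp-}) itself.
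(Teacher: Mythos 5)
Your proposal follows essentially the same route as the paper: the paper likewise works from the explicit block formulas (\ref{kp-}), compares them with the corresponding expressions $K^{-}(\cdot,\cdot,I_i\cdot,I_i\cdot)$ (its equation (\ref{-ah1})) to get $(i)$--$(iii)$, and traces to obtain the block-diagonal Ricci and $*$-Ricci tensors (its (\ref{ro-}) and (\ref{sro-})) whose equality of vertical and horizontal eigenvalues yields $(iv)$--$(vi)$. Your intermediate claims (the condition $Sc(tSc-4(n+2))=0$ from the horizontal block, $Sc=0$ alone from the mixed block for $I_2$, and the discriminants $16n^2(n+2)^2$, $16(n+1)^2(n+2)^2$ and $16(n+2)^2(9n^2-10n+5)$) all agree with the paper's explicit formulas, so the plan is correct, modulo carrying out the same long computations the paper also leaves implicit.
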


\begin{proof}
$(i)$  This statement follows from  (\ref{p1}), (\ref{ror}), (\ref{m3}),  (\ref{ti}) and (\ref{kp-}) after some long
but straightforward computations.\\
$(ii)+(iii)$ From (\ref{ti}) and (\ref{kp-})  we obtain

\begin{eqnarray}\label{-ah1}
&K^{-}(j^{-}_{\ast u}A^{*},j^{-}_{\ast u}B^{*},I_i(j^{-}_{\ast u}C^{*}),I_i(j^{-}_{\ast u}D^{*}))
=\frac{t}{n}(J^{0}_{3}A, B)(J^{0}_{3}C, D)\nonumber\\
&K^{-}(j^{-}_{\ast u}A^{*},j^{-}_{\ast u}B^{*},I_i(j^{-}_{\ast u}C^{*}),I_i(j^{-}_{\ast u}B(\xi)))=0\nonumber\\
&K^{-}(j^{-}_{\ast u}A^{*},j^{-}_{\ast u}B(\xi),I_i(j^{-}_{\ast u}B^{*}),I_i(j^{-}_{\ast u}B(\eta)))= \nonumber\\
&=(\varepsilon_i \frac{t^{2}Sc^{2}}{64n(n+2)^{2}}-\varepsilon_i\frac{t Sc}{8n(n+2)})(A,B)g(X,Y)
-\varepsilon_i\frac{t^2 Sc^2}{64n(n+2)^2}(J^{0}_{3}A,
B)g(J_{3}X,Y)\nonumber\\
&K^{-}(j^{-}_{\ast u}B(\xi),j^{-}_{\ast u}B(\eta),I_i(j^{-}_{\ast u}B(\zeta)),I_i(j^{-}_{\ast u}A^{*}))=0\\
&K^{-}(j^{-}_{\ast u}B(\xi),j^{-}_{\ast u}B(\eta),I_i(j^{-}_{\ast u}B(\zeta)),I_i(j^{-}_{\ast u}B(\tau)))=R(X,Y,Z,T)-\nonumber\\
&-\frac{tSc^{2}}{64n(n+2)^{2}}\{g(Y,J_{1}Z)g(X,J_{1}T)+g(Y,J_{2}Z)g(X,J_{2}T)-\nonumber\\
&-g(X,J_{1}Z)g(Y,J_{1}T)-g(X,J_{2}Z)g(Y,J_{2}T)\}+\nonumber\\
&+(\frac{Sc}{4n(n+2)}-\frac{2tSc^{2}}{64n(n+2)^{2}})(g(X,J_{1}Y)g(Z,J_{1}T)+g(X,J_{2}Y)g(Z,J_{2}T))\nonumber
\end{eqnarray}
for $i=1,2, \varepsilon_1=-1,\ \varepsilon_2=+1,\ \ u \in P(M),\
A,B,C,D\in m_3,\ \xi,\eta,\zeta,\tau\in \mathbb{R}^{4n}$ and
$u(\xi)=X,\ u(\eta)=Y,\ u(\zeta)=Z,\ u(\tau)=T.$ Now the conclusion
follows from (\ref{kp-}) and (\ref{-ah1}).
\\
$(iv)+(v)+(vi)$ Using (\ref{kp-})  we obtain the
following formulas for the Ricci tensors $\rho^{-}$ of $(Z^{-},
h_{t})$:

\begin{eqnarray}\label{ro-}
&\rho^{-}(j^{-}_{\ast u}A^{*},j^{-}_{\ast u}B^{*})=
[\frac{tSc^{2}}{16(n+2)^{2}}+\frac{1}{nt}]h_{t}(j^{-}_{\ast u}A^{*},j^{-}_{\ast u}B^{*})\nonumber\\
&\rho^{-}(j^{-}_{\ast u}A^{*},j^{-}_{\ast u}B(\xi))=0\\
&\rho^{-}(j^{-}_{\ast u}B(\xi),j^{-}_{\ast
u}B(\eta))=[\frac{Sc}{4n}-\frac{tSc^{2}}{16n(n+2)^{2}}]h_{t}(j^{-}_{\ast
u}B(\xi),j^{-}_{\ast u}B(\eta))\nonumber
\end{eqnarray}

Similarly, from (\ref{ti}) and  (\ref{kp-})
we get *-Ricci tensors $\rho^{*-}_{1}$ of  $(Z^{-},I_{1}, h_{t})$,
$\rho^{*-}_{2}$ of  $(Z^{-},I_{2}, h_{t})$:

\begin{eqnarray}\label{sro-}
&\rho^{*-}_{1}(j^{-}_{\ast u}A^{*},j^{-}_{\ast u}B^{*})
=[-\frac{t Sc^{2}}{16(n+2)^{2}}+\frac{Sc}{2(n+2)}+\frac{1}{nt}]h_{t}(j^{-}_{\ast u}A^{*},j^{-}_{\ast u}B^{*})\nonumber\\
&\rho^{*-}_{2}(j^{-}_{\ast u}A^{*},j^{-}_{\ast u}B^{*})=
[\frac{t Sc^{2}}{16(n+2)^{2}}-\frac{Sc}{2(n+2)}+\frac{1}{nt}]h_{t}(j^{-}_{\ast u}A^{*},j^{-}_{\ast u}B^{*})\nonumber\\
&\rho^{*-}_{1}(j^{-}_{\ast u}A^{*},j^{-}_{\ast u}B(\xi))=\rho^{*-}_{2}(j^{-}_{\ast u}A^{*},j^{-}_{\ast u}B(\xi))=0\\
&\rho^{*-}_{1}(j^{-}_{\ast u}B(\xi),j^{-}_{\ast u}B(\eta))=
[\frac{Sc(n+1)}{4n(n+2)}]h_{t}(j^{-}_{\ast u}B(\xi),j^{-}_{\ast u}B(\eta))\nonumber\\
&\rho^{*-}_{2}(j^{-}_{\ast u}B(\xi),j^{-}_{\ast
u}B(\eta))=[\frac{tSc^{2}}{16n(n+2)^{2}}+\frac{Sc(n-1)}{4n(n+2)}]h_{t}(j^{-}_{\ast
u}B(\xi),j^{-}_{\ast u}B(\eta))\nonumber
\end{eqnarray}

Now $(iv)$, $(v)$ and $(vi)$ follows from (\ref{ro-}) and (\ref{sro-}).
\end{proof}

\begin{thm}\label{TR}
Let $(M,\sigma,g)$ be a $4n$-dimensional paraquaternionic K\"{a}hler
manifold and $(Z^{+}, P_{i}, h_{t})$, $i = 1, 2$, the reflector spaces
associated. Then:
\\$(i)$ The manifolds $(Z^{+}, P_{i}, h_{t}),\ i = 1,2$,
belong always to  $APH2$ and $APH3$ and are with para-Hermitian
Ricci tensor and with para-Hermitian *-Ricci tensor;
\\$(ii)$ The manifolds $(Z^{+}, P_{1}, h_{t})$ belong to $APH1$ iff $Sc = 0$ or $Sc =-
\frac{4(n + 2)}{t}$;
\\$(iii)$ The manifolds $(Z^{+}, P_{2}, h_{t})$ belong to $APH1$ iff $Sc = 0$;
\\$(iv)$ The manifolds $(Z^{+}, P_{i}, h_{t}), i = 1, 2$  are Einstein iff
\[ Sc = -\frac{ 4(n + 2) }{ t }\ \text{\rm or}\ Sc = -\frac{4(n+2)}{ (n+1)t
};\]
\\$(v)$ The manifolds $(Z^{+}, P_{1}, h_{t})$ are *-Einstein iff
\[ Sc = -\frac{4(n + 2)}{ t}\ \text{\rm or}\ Sc = \frac{4(n+2)}{ nt};\]
\\$(vi)$ The manifolds $(Z^{+}, P_{2}, h_{t})$ are *-Einstein iff
\[ Sc=\frac{2(n+2)}{ (n-1)t }[1-3n -\sqrt{ 9n^{2}-10n+5 }] \]
or
\[ Sc=\frac{2(n+2)}{ (n-1)t }[1-3n +\sqrt{ 9n^{2}-10n+5 }]. \]
\end{thm}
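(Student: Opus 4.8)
The plan is to run the same argument as in the proof of Theorem \ref{TT}, with the reflector-space data $(\ref{m1})$, $(\ref{tp})$ and the curvature formulas $(\ref{kp+})$ playing the role of the twistor-space data $(\ref{m3})$, $(\ref{ti})$ and $(\ref{kp-})$; the only differences lie in the signs and in which of $J_{1},J_{2},J_{3}$ plays the part of the fibre structure ($J^{0}_{1}$, a product structure, for $Z^{+}$, in place of the complex structure $J^{0}_{3}$ for $Z^{-}$), and correspondingly the critical values of $Sc$ below will turn out to be those of Theorem \ref{TT} with $Sc$ replaced by $-Sc$.

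For $(i)$, I would substitute $(\ref{kp+})$ into the defining identities, treating separately the five types of arguments induced by the splitting $T Z^{+}=(m_{1})^{*}\oplus Q$ (all vertical; three vertical and one horizontal; two and two; one and three; all horizontal). On the all-vertical block $APH2$ follows from $(\ref{p1})$, $(\ref{V25})$ and $(J^{0}_{1})^{2}=+1$; on the all-horizontal block it reduces, after feeding in the base-curvature relations $(\ref{ror})$, to algebraic identities among the bilinear terms $g(\cdot,J_{\beta}\cdot)\,g(\cdot,J_{\beta}\cdot)$; the mixed blocks either vanish by $(\ref{kp+})$ or are immediate. Then $APH3$ is a consequence of $APH2\subset APH3$, and the para-Hermitian character of the Ricci and $*$-Ricci tensors will be read off from the explicit expressions obtained in $(iv)$--$(vi)$, where these tensors turn out to be scalar multiples of $h_{t}$ on each block of the splitting.

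For $(ii)$ and $(iii)$, I would compute $K^{+}(j^{+}_{\ast u}\cdot,\,j^{+}_{\ast u}\cdot,\,P_{i}(j^{+}_{\ast u}\cdot),\,P_{i}(j^{+}_{\ast u}\cdot))$ from $(\ref{tp})$ and $(\ref{kp+})$, obtaining the reflector analogue of $(\ref{-ah1})$, now with $\varepsilon_{1}=+1$, $\varepsilon_{2}=-1$ (the swap relative to the twistor case being forced by $(J^{0}_{1})^{2}=+1$ versus $(J^{0}_{3})^{2}=-1$). Comparing this with $(\ref{kp+})$, the identity $APH1$, i.e. $R(X,Y,Z,T)=-R(X,Y,J Z,J T)$, must hold block by block: the all-horizontal block contributes (after using $(\ref{ror})$) the condition $Sc=0$ or $Sc=-\frac{4(n+2)}{t}$ for both $i$, while the two-vertical, two-horizontal block, into which the $\varepsilon_{i}$ enter, yields the same condition for $P_{1}$ but only $Sc=0$ for $P_{2}$; the remaining blocks impose nothing new. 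This gives $(ii)$ and $(iii)$.

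For $(iv)$, $(v)$ and $(vi)$, I would contract $(\ref{kp+})$ over a pseudo-orthonormal frame adapted to the splitting, using Theorem \ref{2.1} together with $(\ref{roi})$ and $(\ref{ror})$ to evaluate the horizontal traces; this produces the Ricci tensor $\rho^{+}$ of $(Z^{+},h_{t})$ and the $*$-Ricci tensors $\rho^{*+}_{1}$, $\rho^{*+}_{2}$ of $(Z^{+},P_{i},h_{t})$, each a scalar multiple of $h_{t}$ separately on $(m_{1})^{*}$ and on $Q$ (with zero mixed part). Equating the two multiples then yields, for each tensor, a quadratic equation in $Sc$; in the Einstein case and in the $*$-Einstein case of $P_{1}$ its discriminant turns out to be a perfect square, giving the stated pairs of values, while in the $*$-Einstein case of $P_{2}$ the roots are the irrational numbers $\frac{2(n+2)}{(n-1)t}\bigl[1-3n\pm\sqrt{9n^{2}-10n+5}\bigr]$; in all cases the solutions coincide with those of Theorem \ref{TT} under $Sc\mapsto-Sc$, which is a convenient check. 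I expect the main obstacle to be bookkeeping rather than anything conceptual: one must keep track, without error, of the signs $\tau_{\alpha}$, of which $J_{\alpha}$ is complex and which are product structures, and of the $\varepsilon_{i}$ entering the $*$-Ricci contractions, since a single sign slip would change the critical values of $Sc$. No geometric input beyond $(\ref{ror})$, Theorem \ref{2.1} and the curvature formulas $(\ref{kp+})$ is needed.
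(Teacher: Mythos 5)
Your plan is, in outline, exactly the paper's proof: for (ii)--(iii) one computes $K^{+}$ with $P_{i}$ applied to the last two arguments from $(\ref{tp})$ and $(\ref{kp+})$ (this is the paper's formula $(\ref{+aph1})$) and compares it with $(\ref{kp+})$ block by block, and for (iv)--(vi) one contracts $(\ref{kp+})$ to get the blockwise Ricci and $*$-Ricci tensors (the paper's $(\ref{ro+})$ and $(\ref{sro+})$) and equates the vertical and horizontal eigenvalues; your block-by-block conclusions and the $Sc\mapsto-Sc$ correspondence with Theorem \ref{TT} are what the paper obtains.

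There is, however, one concrete misstatement, and it is exactly the kind of sign slip you warn against: the $\varepsilon_{i}$ do \emph{not} swap relative to the twistor case. In the paper's $(\ref{+aph1})$ one has $\varepsilon_{1}=-1$, $\varepsilon_{2}=+1$, the same as in $(\ref{-ah1})$, since the only difference between $P_{1}$ and $P_{2}$ is the explicit minus sign in $(\ref{tp})$, exactly as for $I_{1},I_{2}$ in $(\ref{ti})$; the effect of $(J^{0}_{1})^{2}=+1$ versus $(J^{0}_{3})^{2}=-1$ enters instead through the compatibility $g(J_{1}X,J_{1}Y)=-g(X,Y)$ and through the pairing $(\cdot,\cdot)$ on $m_{1}$, and is absorbed into the coefficients of $(\ref{+aph1})$, not into a relabelling of $\varepsilon_{1},\varepsilon_{2}$. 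This matters: if one really carried the swapped $\varepsilon_{i}$ through the two-vertical/two-horizontal block, that block would yield only $Sc=0$ for $P_{1}$ and the pair $Sc\in\{0,\,-4(n+2)/t\}$ for $P_{2}$, i.e.\ items (ii) and (iii) would come out interchanged --- contradicting both the statement of the theorem and the per-block outcome you yourself assert in the same sentence. So the mixed-block computation must be redone with the correct signs; once that is fixed, the rest of your argument proceeds as in the paper.
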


\begin{proof}
$(i)$  This statement follows from (\ref{p1}), (\ref{ror}), (\ref{m1}), (\ref{tp}) and (\ref{kp+}) after some long
but straightforward computations.\\
$(ii)+(iii)$ From (\ref{tp}) and (\ref{kp+}) we obtain

\begin{eqnarray}\label{+aph1}
&K^{+}(j^{+}_{\ast u}A^{*},j^{+}_{\ast u}B^{*},P_i(j^{+}_{\ast u}C^{*}),P_i(j^{+}_{\ast u}D^{*}))
=-\frac{t}{n}(J^{0}_{1}A, B)(J^{0}_{1}C, D)\nonumber\\
&K^{+}(j^{+}_{\ast u}A^{*},j^{+}_{\ast u}B^{*},P_i(j^{+}_{\ast u}C^{*}),P_i(j^{+}_{\ast u}B(\xi)))=0\nonumber\\
&K^{+}(j^{+}_{\ast u}A^{*},j^{+}_{\ast u}B(\xi),P_i(j^{+}_{\ast u}B^{*}),P_i(j^{+}_{\ast u}B(\eta)))=\nonumber\\
&=(\varepsilon_i\frac{t^{2}Sc^{2}}{64n(n+2)^{2}}+\varepsilon_i\frac{t Sc}{8n(n+2)})(A,B)g(X,Y)+\varepsilon_i\frac{t^{2}Sc^{2}}{64n(n+2)^{2}}(J^{0}_{1}A,
B)g(J_{1}X,Y)\\
&K^{+}(j^{+}_{\ast u}B(\xi),j^{+}_{\ast u}B(\eta),P_i(j^{+}_{\ast u}B(\zeta)),P_i(j^{+}_{\ast u}A^{*}))=0\nonumber\\
&K^{+}(j^{+}_{\ast u}B(\xi),j^{+}_{\ast u}B(\eta),P_i(j^{+}_{\ast u}B(\zeta)),P_i(j^{+}_{\ast u}B(\tau)))=-R(X,Y,Z,T)-\nonumber\\
&-\frac{t Sc^{2}}{64n(n+2)^{2}}\{g(Y,J_{2}Z)g(X,J_{2}T)-g(Y,J_{3}Z)g(X,J_{3}T)-\nonumber\\
&-g(X,J_{2}Z)g(Y,J_{2}T)+g(X,J_{3}Z)g(Y,J_{3}T)\}+\nonumber\\
&+(\frac{Sc}{4n(n+2)}+\frac{2t Sc^{2}}{64n(n+2)^{2}})(-g(X,J_{2}Y)g(Z,J_{2}T)+g(X,J_{3}Y)g(Z,J_{3}T))\nonumber
\end{eqnarray}
for $i=1,2, \varepsilon_1=-1,\ \varepsilon_2=+1,\ u \in P(M),\ A,B,C,D\in m_1,\ \xi,\eta,\zeta,\tau\in
\mathbb{R}^{4n}$ and $u(\xi)=X,\ u(\eta)=Y,\ u(\zeta)=Z,\ u(\tau)=T$.

Now the conclusion follows from (\ref{kp+}) and (\ref{+aph1}).
\\
$(iv)+(v)+(vi)$ Using (\ref{kp+}) we obtain the
following formulas for the Ricci tensors  $\rho^{+}$ of  $(Z^{+}, h_{t})$:

\begin{eqnarray}\label{ro+}
&\rho^{+}(j^{+}_{\ast u}A^{*},j^{+}_{\ast u}B^{*})
=[-\frac{tSc^{2}}{16(n+2)^{2}}-\frac{1}{nt}]h_{t}(j^{+}_{\ast u}A^{*},j^{+}_{\ast u}B^{*})\nonumber\\
&\rho^{+}(j^{+}_{\ast u}A^{*},j^{+}_{\ast u}B(\xi))=0\\
&\rho^{+}(j^{+}_{\ast u}B(\xi),j^{+}_{\ast
u}B(\eta))=[\frac{Sc}{4n}+\frac{t Sc^{2}}{16n(n+2)^{2}}]h_{t}(j^{+}_{\ast
u}B(\xi),j^{+}_{\ast u}B(\eta)).\nonumber
\end{eqnarray}

Similarly, from (\ref{tp}) and (\ref{kp+})
we get *-Ricci tensors $\rho^{*+}_{1}$ of
$(Z^{+},P_{1}, h_{t})$ and $\rho^{*+}_{2}$ of  $(Z^{+},P_{2},
h_{t})$:

\begin{eqnarray}\label{sro+}
&\rho^{*+}_{1}(j^{+}_{\ast u}A^{*},j^{+}_{\ast u}B^{*})
=[-\frac{tSc^{2}}{16(n+2)^{2}}-\frac{Sc}{2(n+2)}+\frac{1}{nt}]h_{t}(j^{+}_{\ast u}A^{*},j^{+}_{\ast u}B^{*})\nonumber\\
&\rho^{*+}_{2}(j^{+}_{\ast u}A^{*},j^{+}_{\ast u}B^{*})
=[\frac{t Sc^{2}}{16(n+2)^{2}}+\frac{Sc}{2(n+2)}+\frac{1}{nt}]h_{t}(j^{+}_{\ast u}A^{*},j^{+}_{\ast u}B^{*})\nonumber\\
&\rho^{*+}_{1}(j^{+}_{\ast u}A^{*},j^{+}_{\ast u}B(\xi))=\rho^{*+}_{2}(j^{+}_{\ast u}A^{*},j^{+}_{\ast u}B(\xi))=0\\
&\rho^{*+}_{1}(j^{+}_{\ast u}B(\xi),j^{+}_{\ast u}B(\eta))
=[-\frac{Sc(n+1)}{4n(n+2)}]h_{t}(j^{+}_{\ast u}B(\xi),j^{+}_{\ast u}B(\eta))\nonumber\\
&\rho^{*+}_{2}(j^{+}_{\ast u}B(\xi),j^{+}_{\ast
u}B(\eta))=[\frac{t Sc^{2}}{16n(n+2)^{2}}-\frac{Sc(n-1)}{4n(n+2)}]h_{t}(j^{+}_{\ast
u}B(\xi),j^{+}_{\ast u}B(\eta)).\nonumber
\end{eqnarray}

Now $(iv)$, $(v)$ and $(vi)$ follows from (\ref{ro+}) and (\ref{sro+}).
\end{proof}

\begin{rem}
We note that the corresponding result of the Theorem \ref{TT} in
quaternionic setting was obtained in \cite{AGI} and the assertions
stated at items $(iv)$ in Theorems \ref{TT} and \ref{TR} were also
proved, but using a different method, by Alekseevsky and Cort\'{e}s
(compare with \cite[Corollary 6, page 126]{AC}).

\end{rem}

\section{Mixed 3-Sasakian structures in a $\text{SO}(2,1)$-principal bundle over a paraquaternionic K\"{a}hler manifold}

\begin{defn}
        Let $M$ be a smooth manifold equipped with a triple
        $(\varphi,\xi,\eta)$, where $\varphi$ is a field  of endomorphisms
        of the tangent spaces, $\xi$ is a vector field and $\eta$ is a
        1-form on $M$. If we have:
\begin{equation}\label{V1}
        \varphi^2=\tau(-I+\eta\otimes\xi),\ \ \  \eta(\xi)=1
\end{equation}
        then we say that:

        $(i)$ $(\varphi,\xi,\eta)$ is an \emph{almost contact
        structure} on $M$, if $\tau=1$ (\cite{SAS}).

        $(ii)$ $(\varphi,\xi,\eta)$ is an \emph{almost paracontact
        structure} on $M$, if $\tau=-1$ (\cite{SAT}).
\end{defn}

We remark that many authors also include in the above definition the
conditions that
\[
\varphi\xi=0,\ \eta\circ\varphi=0,
\]
although these are deducible from (\ref{V1}) (see \cite{BLR}).

\begin{defn}\cite{CP}
A \emph{mixed 3-structure} on a smooth manifold $M$ is a triple of
structures $(\varphi_\alpha,\xi_\alpha,\eta_\alpha)$,
$\alpha\in\{1,2,3\}$, which are almost paracontact structures for
$\alpha=1,2$ and almost contact structure for $\alpha=3$, satisfying
the following conditions:
\begin{equation}\label{V3}
\eta_\alpha(\xi_\beta)=0,
\end{equation}
\begin{equation}\label{V4}
\varphi_\alpha(\xi_\beta)=\tau_\beta\xi_\gamma,\ \ \varphi_\beta(\xi_\alpha)=-\tau_\alpha\xi_\gamma,\\
\end{equation}
\begin{equation}\label{V5}
\eta_\alpha\circ\varphi_\beta=-\eta_\beta\circ\varphi_\alpha=
\tau_\gamma\eta_\gamma\,,\\
\end{equation}
\begin{equation}\label{V6}
\varphi_\alpha\varphi_\beta-\tau_\alpha\eta_\beta\otimes\xi_\alpha=
-\varphi_\beta\varphi_\alpha+\tau_\beta\eta_\alpha\otimes\xi_\beta=
\tau_\gamma\varphi_\gamma\,,
\end{equation}
where $(\alpha,\beta,\gamma)$ is an even permutation of $(1,2,3)$
and $\tau_1=\tau_2=-\tau_3=-1$.

Moreover, if a manifold $M$ with a mixed 3-structure
$(\varphi_\alpha,\xi_\alpha,\eta_\alpha)_{\alpha=\overline{1,3}}$
 admits a semi-Riemannian metric $g$ such that:
\begin{equation}\label{V7}
g(\varphi_\alpha X, \varphi_\alpha Y)=\tau_\alpha
[g(X,Y)-\varepsilon_\alpha\eta_\alpha(X)\eta_\alpha(Y)],
\end{equation}
for all $X,Y\in\Gamma(TM)$ and $\alpha=1,2,3$, where
$\varepsilon_\alpha=g(\xi_\alpha,\xi_\alpha)=\pm1$, then we say that
$M$ has a \emph{metric mixed 3-structure} and $g$ is called a
\emph{compatible metric}.
\end{defn}

\begin{rem}
From (\ref{V7}) we obtain
\begin{equation}\label{V8}
\eta_\alpha(X)=\varepsilon_\alpha g(X,\xi_\alpha),\ g(\varphi_\alpha
X, Y)=- g(X,\varphi_\alpha Y)
\end{equation}
for all $X,Y\in\Gamma(TM)$ and $\alpha=1,2,3$.

Note that if
$(M,(\varphi_\alpha,\xi_\alpha,\eta_\alpha)_{\alpha=\overline{1,3}},g)$
is a manifold with a metric mixed 3-structure then from (\ref{V8})
it follows
\[
g(\xi_1,\xi_1)=g(\xi_2,\xi_2)=-g(\xi_3,\xi_3).
\]

Hence the vector fields $\xi_1$ and $\xi_2$ are both either
space-like or time-like and these force the causal character of the
third vector field $\xi_3$. We may therefore distinguish between
\emph{positive} and \emph{negative metric mixed 3-structures},
according as $\xi_1$ and $\xi_2$ are both space-like, or both
time-like vector fields. Because one can check that, at each point
of $M$,
 there always exists a pseudo-orthonormal frame field given by
$\{(E_i,\varphi_1 E_i, \varphi_2 E_i, \varphi_3
E_i)_{i=\overline{1,n}}\,, \xi_1, \xi_2, \xi_3\}$ we conclude that
the dimension of the manifold is $4n+3$ and the signature of $g$ is
$(2n+1,2n+2)$, where we put first the minus signs, if the metric
mixed 3-structure is positive (\emph{i.e.}
$\varepsilon_1=\varepsilon_2=-\varepsilon_3=1$), or the signature of
$g$ is $(2n+2,2n+1)$, if the metric mixed 3-structure is negative
(\emph{i.e.} $\varepsilon_1=\varepsilon_2=-\varepsilon_3=-1$).
\end{rem}

\begin{defn}\cite{CP}
Let $M$ be a manifold endowed with a (positive/negative) metric
mixed 3-–structure
$((\varphi_\alpha,\xi_\alpha,\eta_\alpha)_{\alpha=\overline{1,3}},g)$.
This structure is said to be:

$(i)$ a (positive/negative) \emph{metric mixed 3--contact structure}
if $d\eta_\alpha=\Phi_\alpha$, for each $\alpha\in\{1,2,3\}$, where
$\Phi_\alpha$ is the fundamental 2--form defined by
$\Phi_\alpha(X,Y):=g(X,\varphi_\alpha Y)$.

$(ii)$ a (positive/negative) \emph{mixed 3–-K-–contact structure} if
$((\varphi_3,\xi_3,\eta_3),g)$ is a K–-contact structure and
$((\varphi_1,\xi_1,\eta_1),g)$, $((\varphi_2,\xi_2,\eta_2),g)$ are
para–-K–-contact structures, i.e.
$\nabla\xi_\alpha=-\varepsilon_\alpha\varphi_\alpha$, for each
$\alpha\in\{1,2,3\}$, where $\nabla$ is Levi-Civita connection of
$g$.

$(iii)$ a (positive/negative) \emph{mixed 3-Sasakian structure} if
$(\varphi_3,\xi_3,\eta_3,g)$ is a Sasakian structure and
$(\varphi_1,\xi_1,\eta_1,g)$, $(\varphi_2,\xi_2,\eta_2,g)$ are
para-Sasakian structures, \emph{i.e.}

\begin{equation}\label{m3s}
(\nabla_X\varphi_\alpha)
Y=\tau_\alpha[g(X,Y)\xi_\alpha-\varepsilon_\alpha\eta_\alpha(Y)X]
\end{equation}

for all $X,Y\in\Gamma(TM)$ and $\alpha\in\{1,2,3\}$.
\end{defn}

\begin{rem}\label{r1}
Note that in fact mixed metric 3-–contact, mixed 3–-K–-contact and
mixed 3–-Sasakian structures define the same class of manifolds, as
proved in \cite{CP}. Moreover, like their Riemannian counterparts,
mixed 3-Sasakian structures are Einstein, but now the scalar
curvature can be either positive or negative:
\end{rem}

\begin{thm}\label{cp}\cite{CP,IV} Any $(4n+3)-$dimensional manifold endowed with a
mixed $3$-Sasakian structure is an Einstein space with Einstein
constant $\lambda=(4n+2)\varepsilon$, with $\varepsilon=\mp1$,
according as the metric mixed 3-structure is positive or negative,
respectively.
\end{thm}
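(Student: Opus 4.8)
The plan is to compute the Ricci tensor of a $(4n+3)$-dimensional mixed 3-Sasakian manifold directly from the defining identity \eqref{m3s} and show it is a constant multiple of the metric. First I would record the standard pointwise consequences of \eqref{m3s}: taking $Y=\xi_\alpha$ gives $\nabla_X\xi_\alpha=-\varepsilon_\alpha\varphi_\alpha X$ (the mixed 3-K-contact condition), and differentiating the compatibility relation \eqref{V7} together with \eqref{V8} shows that each $\xi_\alpha$ is a Killing vector field. From the formula for $\nabla_X\xi_\alpha$ one computes the curvature applied to $\xi_\alpha$, namely $R(X,Y)\xi_\alpha = -\varepsilon_\alpha\{(\nabla_X\varphi_\alpha)Y-(\nabla_Y\varphi_\alpha)X\}$, and substituting \eqref{m3s} yields the clean identity
\[
R(X,Y)\xi_\alpha=\tau_\alpha\varepsilon_\alpha\big[\eta_\alpha(Y)X-\eta_\alpha(X)Y\big].
\]
In particular $R(X,\xi_\alpha)\xi_\alpha=\tau_\alpha\varepsilon_\alpha[X-\eta_\alpha(X)\xi_\alpha]$, so contracting gives $\rho(\xi_\alpha,\xi_\alpha)=\tau_\alpha\varepsilon_\alpha(4n+2)\cdot$ (a sign bookkeeping factor), which already pins down the Einstein constant on the vertical directions once one is careful that $\varepsilon_\alpha^2=1$ and $\tau_\alpha\varepsilon_\alpha=\varepsilon$ is independent of $\alpha$ for a positive (resp. negative) structure.

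Next I would handle a generic tangent vector $X$ orthogonal to the three $\xi_\alpha$. The trick, exactly as in the Riemannian 3-Sasakian case, is to use the adapted pseudo-orthonormal frame $\{E_i,\varphi_1E_i,\varphi_2E_i,\varphi_3E_i\}\cup\{\xi_1,\xi_2,\xi_3\}$ guaranteed in the excerpt and split the Ricci trace $\rho(X,X)=\sum_j\varepsilon_j g(R(X,E_j)X,E_j)$ into the contribution of the three $\xi_\alpha$-directions and the contribution of the $4n$ "horizontal" directions. The $\xi_\alpha$-part is computed from the curvature identity above: $g(R(X,\xi_\alpha)X,\xi_\alpha)$ contributes $\varepsilon_\alpha\cdot\tau_\alpha\varepsilon_\alpha[g(X,X)-\eta_\alpha(X)^2]=\varepsilon[g(X,X)-\eta_\alpha(X)^2]$, so the three vertical directions together give $3\varepsilon g(X,X)$ (using $\eta_\alpha(X)=0$). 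For the horizontal part one invokes the general fact, valid on any metric mixed 3-contact manifold, that the three structures $\varphi_\alpha$ together with the $\eta_\alpha\otimes\xi_\alpha$ corrections behave like a paraquaternionic structure on the horizontal distribution; the $O'Neill$-type / submersion relation to the paraquaternionic Kähler base (the fibration is a pseudo-Riemannian submersion with totally geodesic fibres onto $M$, which is Einstein by Theorem \ref{2.1}) transfers the base Einstein condition upstairs. Concretely, using \eqref{m3s} one shows $\rho(\varphi_\alpha X,\varphi_\alpha Y)=\tau_\alpha\rho(X,Y)+(\text{terms in }\eta_\alpha)$, and combining the three such relations forces $\rho$ to be proportional to $g$ on the horizontal distribution with the same constant $(4n+2)\varepsilon$.

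The main obstacle — and the step I would spend the most care on — is the sign bookkeeping: the factors $\tau_\alpha$, $\varepsilon_\alpha$, the mixed signature of $g$, and the placement of the minus signs in the signature convention $(2n+1,2n+2)$ vs. $(2n+2,2n+1)$ all interact, and one must verify that the apparently $\alpha$-dependent quantities collapse to a single $\varepsilon=\mp1$. The key algebraic identities that make this work are $\tau_1=\tau_2=-\tau_3$ and $\varepsilon_1=\varepsilon_2=-\varepsilon_3$ (both equal to $1$ in the positive case, $-1$ in the negative case), so that $\tau_\alpha\varepsilon_\alpha$ is the same for all three values of $\alpha$ — this is precisely the statement $\varepsilon=\mp1$ in the theorem. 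Once the vertical computation gives $\rho(\xi_\alpha,\cdot)=(4n+2)\varepsilon\,g(\xi_\alpha,\cdot)$ and the horizontal computation gives $\rho=(4n+2)\varepsilon\,g$ on the complement, and one checks the mixed terms $\rho(\xi_\alpha,X)$ vanish (which follows since $\xi_\alpha$ is Killing and $R(X,Y)\xi_\alpha$ lies in the span described above), the Einstein property with constant $\lambda=(4n+2)\varepsilon$ follows. I would also remark that this is the content of \cite{CP,IV} cited in the statement, so in the paper it suffices to recall the argument in outline rather than reproduce every contraction.
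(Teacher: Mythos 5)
The paper itself gives no proof of Theorem \ref{cp}: it is quoted from \cite{CP,IV}, so there is no internal argument to compare yours with, and your sketch has to stand on its own. Its vertical part is essentially right, but only after a sign correction: substituting (\ref{m3s}) into $R(X,Y)\xi_\alpha=-\varepsilon_\alpha\{(\nabla_X\varphi_\alpha)Y-(\nabla_Y\varphi_\alpha)X\}$ gives $R(X,Y)\xi_\alpha=\tau_\alpha(\eta_\alpha(Y)X-\eta_\alpha(X)Y)$, because the two factors of $\varepsilon_\alpha$ cancel; this is exactly the identity (\ref{rm3s}) used later in the paper, and it does \emph{not} carry the extra $\varepsilon_\alpha$ you wrote. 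The discrepancy is not cosmetic: with your version the contribution of the direction $\xi_\alpha$ to $\rho(X,X)$ is proportional to $\tau_\alpha$, which is not independent of $\alpha$ (recall $\tau_1=\tau_2=-\tau_3$), so the three vertical terms would sum to $-g(X,X)$ instead of the $3\varepsilon g(X,X)$ you claim; with the corrected identity each term is $\varepsilon_\alpha\tau_\alpha g(X,X)=\varepsilon g(X,X)$ and the $\alpha$-independence you appeal to ($\varepsilon=\tau_\alpha\varepsilon_\alpha$) works, yielding also $\rho(\xi_\alpha,\xi_\alpha)=(4n+2)\tau_\alpha$ and $\rho(X,\xi_\alpha)=(4n+2)\tau_\alpha\eta_\alpha(X)$, i.e. the vertical and mixed blocks of the Einstein condition with $\lambda=(4n+2)\varepsilon$.

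The genuine gap is the horizontal block, which is where all the content of the theorem lies. A mixed 3-Sasakian manifold does not in general fibre over a paraquaternionic K\"ahler manifold: the distribution spanned by $\xi_1,\xi_2,\xi_3$ is integrable but need not be regular, so there is no pseudo-Riemannian submersion available, and the statement is pointwise and should not need such a hypothesis. Even granting a local submersion, Theorem \ref{2.1} only says the base is Einstein and does not determine its Einstein constant, so ``transferring the base Einstein condition upstairs'' fixes nothing; to get $(4n+2)\varepsilon$ one would still have to run the O'Neill equations with the specific $A$- and $T$-tensors of this fibration, which is essentially the content of (\ref{ric}) quoted from \cite{CP2} and would make the argument circular as presented. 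The alternative you mention, namely identities of the form $\rho(\varphi_\alpha X,\varphi_\alpha Y)=\tau_\alpha\rho(X,Y)+(\eta_\alpha\hbox{-terms})$ combined over $\alpha=1,2,3$, is indeed the correct Kashiwada-type strategy (and the one followed in \cite{CP}), but that identity is precisely the nontrivial step: it requires second-order consequences of (\ref{m3s}), i.e. curvature identities with $\varphi_\alpha$-rotated arguments analogous to (\ref{ror}), none of which your sketch derives. As written, the horizontal part of the Ricci tensor, hence the value of the Einstein constant on the $4n$-dimensional horizontal distribution, is asserted rather than proved.
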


\begin{rem} Several examples of manifolds endowed with mixed 3-Sasakian structures
can be found in \cite{IVV}. We note that the unit pseudo-sphere
$S^{4n+3}_{2n+2}\subset\mathbb{R}^{4n+4}_{2n+2}$ is the canonical
example of manifold with negative mixed 3-Sasakian structure, while
the pseudo-hyperbolic space
$H^{4n+3}_{2n+1}\subset\mathbb{R}^{4n+4}_{2n+2}$ can be endowed with
a canonical positive mixed 3-Sasakian structure.
\end{rem}

Let $(M,\sigma,g)$ be a paraquaternionic K\"{a}hler manifold and let
$P$ be the bundle associated with $(M,\sigma,g)$. That is, $P$ is
the bundle whose transition functions and structure group are the
same as $\sigma$, but whose fibre is $\text{SO}(2,1)$. The Lie
algebra of $\text{SO}(2,1)$ is $\text{so}(2,1)$ and we consider two
of its bases: $\mathcal{B}^+=\{e_1,e_2,e_3\}$ and
$\mathcal{B}^-=\{-e_1,-e_2,-e_3\}$, where
\[
e_1=\left(%
\begin{array}{ccc}
  0 & 0 & 0 \\
  0 & 0 & 2 \\
  0 & 2 & 0 \\
\end{array}%
\right),\ e_2=\left(%
\begin{array}{ccc}
  0 & 0 & 2 \\
  0 & 0 & 0 \\
  2 & 0 & 0 \\
\end{array}%
\right),\ e_3=\left(%
\begin{array}{ccc}
  0 & -2 & 0 \\
  2 & 0 & 0 \\
  0 & 0 & 0 \\
\end{array}%
\right).
\]

It can be easily verified that
\begin{equation}\label{V15}
[e_1,e_2]=2e_3,\ [e_2,e_3]=-2e_1,\ [e_3,e_1]=-2e_2,
\end{equation}
and
\begin{equation}\label{V16}
[-e_1,-e_2]=-2(-e_3),\ [-e_2,-e_3]=2(-e_1),\ [-e_3,-e_1]=2(-e_2).
\end{equation}

On the other hand, their exp are
\[
{\rm exp}(te_1)=\left(%
\begin{array}{ccc}
  1 & 0 & 0 \\
  0 & {\rm cosh}2t & {\rm sinh}2t \\
  0 & {\rm sinh}2t & {\rm cosh}2t \\
\end{array}%
\right),
\]
\[
{\rm exp}(te_2)=\left(%
\begin{array}{ccc}
  {\rm cosh}2t & 0 & {\rm sinh}2t \\
  0 & 1 & 0 \\
  {\rm sinh}2t & 0 & {\rm cosh}2t \\
\end{array}%
\right),
\]
\[
{\rm exp}(te_2)=\left(%
\begin{array}{ccc}
  {\rm cos}2t & -{\rm sin}2t & 0 \\
   {\rm sin}2t & {\rm cos}2t & 0 \\
  0 & 0 & 1 \\
\end{array}%
\right).
\]

Then, with respect to the basis $\mathcal{B}^+$, we have
\[
{\rm ad}(e_1)=\left(%
\begin{array}{ccc}
  0 & 0 & 0 \\
  0 & 0 & 2 \\
  0 & 2 & 0 \\
\end{array}%
\right),\ {\rm ad}(e_2)=\left(%
\begin{array}{ccc}
  0 & 0 & -2 \\
  0 & 0 & 0 \\
  -2 & 0 & 0 \\
\end{array}%
\right),\ {\rm ad}(e_3)=\left(%
\begin{array}{ccc}
  0 & 2 & 0 \\
  -2 & 0 & 0 \\
  0 & 0 & 0 \\
\end{array}%
\right)
\]
and with respect to the basis $\mathcal{B}^-$, we have
\[
{\rm ad}(-e_1)=\left(%
\begin{array}{ccc}
  0 & 0 & 0 \\
  0 & 0 & -2 \\
  0 & -2 & 0 \\
\end{array}%
\right), {\rm ad}(-e_2)=\left(%
\begin{array}{ccc}
  0 & 0 & 2 \\
  0 & 0 & 0 \\
  2 & 0 & 0 \\
\end{array}%
\right), {\rm ad}(-e_3)=\left(%
\begin{array}{ccc}
  0 & -2 & 0 \\
  2 & 0 & 0 \\
  0 & 0 & 0 \\
\end{array}%
\right).
\]

For each neighborhood $U$ of $(M,\sigma,g)$, we define a
$\text{so}(2,1)$-valued 1-form on $U$ by
\begin{equation}\label{V17}
\omega_U=\sum_{i=1}^{3}\omega_ie_i.
\end{equation}

From (\ref{V14}) and (\ref{V17}) we obtain that on $U\cap U'$ we
have
\[
\omega_{U'}(X)={\rm
ad}(S^{-1}_{UU'})\cdot\omega_U(X)+(S_{UU'})_*(X)\cdot S^{-1}_{UU'}
\]
for all vector fields $X$ on $P$, where $(S_{UU'})_*$ denotes the
differential of the mapping $S_{UU'}:U\cap U'\rightarrow
\text{SO}(2,1)$. Therefore there exists a connection form $\omega$
on $P$ such that
\[
\psi^*\omega=\omega_U,
\]
where $\psi$ is a certain local cross-section of $P$ over $U$.

The curvature form $\Omega$ defined by the connection $\omega$ is a
$\text{so}(2,1)$-valued 2-form given by
\[
\Omega(X,Y)=d\omega(X,Y)+\frac{1}{2}[\omega(X),\omega(Y)]
\]
for all vector fields $X,Y$ on $P$. Hence, using (\ref{V15}) we
derive:
\[
\psi^*\Omega=(d\omega_1-\omega_2\wedge
\omega_3)e_1+(d\omega_2-\omega_3\wedge \omega_1)e_2
+(d\omega_3+\omega_1\wedge \omega_2)e_3.
\]

\begin{thm}
The principal bundle $P$ associated with a paraquaternionic
K\"{a}hler manifold $(M,\sigma,g)$ of non-zero scalar curvature can
be endowed with both a positive and a negative mixed 3-Sasakian
structure.
\end{thm}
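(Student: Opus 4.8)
The plan is to mimic Konishi's construction of a $3$-Sasakian structure on the $\mathrm{SO}(3)$-bundle over a quaternionic Kähler manifold, carrying out the analogous computation for the $\mathrm{SO}(2,1)$-bundle $P$ and keeping track of the signs $\tau_\alpha$ that distinguish contact from paracontact and the signs $\varepsilon_\alpha$ that distinguish the positive from the negative structure. Concretely, for a fixed choice of basis ($\mathcal{B}^+$ for the positive structure, $\mathcal{B}^-$ for the negative one) I would let $\xi_\alpha$ be the fundamental vertical vector field on $P$ corresponding to $\pm e_\alpha$, let $\eta_\alpha$ be the corresponding component of the connection form $\omega$ (so that $\eta_\alpha$ is dual to $\xi_\alpha$ and $\eta_\alpha(\xi_\beta)=\delta_{\alpha\beta}$, giving (\ref{V3}) and $\eta_\alpha(\xi_\alpha)=1$), and define the metric $g_P$ as the sum of the horizontal lift of $g$ and a vertical part built from the restriction of the bilinear form $(\,,\,)$ to $\mathrm{so}(2,1)$, rescaled so that the $\xi_\alpha$ have $g_P(\xi_\alpha,\xi_\alpha)=\varepsilon_\alpha$. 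The endomorphism $\varphi_\alpha$ is then defined to be $j_\alpha$ (the horizontal lift of the adapted endomorphism $J_\alpha$, using the tautological action on horizontal vectors) on horizontal vectors and to act on the vertical distribution by the relations (\ref{V4}), i.e. $\varphi_\alpha\xi_\beta=\tau_\beta\xi_\gamma$, $\varphi_\alpha\xi_\alpha=0$.

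The key steps, in order, are: (1) verify the algebraic identities (\ref{V1}) with $\tau=1$ for $\alpha=3$ and $\tau=-1$ for $\alpha=1,2$, together with (\ref{V4})--(\ref{V6}); these follow from the bracket relations (\ref{V15}) (resp. (\ref{V16})), the paraquaternionic identities (\ref{V25}) for the $J_\alpha$, and the decomposition $TP = \mathcal{H}\oplus\mathcal{V}$, splitting the check into horizontal, vertical, and mixed inputs. (2) Verify the compatibility (\ref{V7}); on horizontal vectors this is just the adaptedness of $g$ to $\sigma$ with the correct sign $\tau_\alpha$, on vertical vectors it reduces to the structure constants of $\mathrm{so}(2,1)$ in the chosen basis, and the mixed terms vanish by the orthogonality of $\mathcal{H}$ and $\mathcal{V}$. (3) Verify the contact condition $d\eta_\alpha=\Phi_\alpha$. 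Here is where the structure equations (\ref{V13}) enter: writing $\psi^*\eta_\alpha=\omega_\alpha$ and using $\psi^*\Omega$ as computed just above the statement, one has $d\omega_\alpha=\tau_\alpha\omega_\beta\wedge\omega_\gamma+\tau_\alpha\nu\,\Omega_\alpha$ (up to the bracket/sign bookkeeping between $\mathcal{B}^+$ and $\mathcal{B}^-$), and the term $\tau_\alpha\omega_\beta\wedge\omega_\gamma$ is exactly the vertical part of $\Phi_\alpha$ coming from (\ref{V4}), while $\tau_\alpha\nu\,\Omega_\alpha$ matches the horizontal part $g(X,\varphi_\alpha Y)$ provided we fix the scaling constant of the vertical metric in terms of $\nu=\frac{Sc}{4n(n+2)}$ — this forces $Sc\neq0$, which is precisely the hypothesis. (4) Invoke Remark \ref{r1}: since mixed $3$-contact, mixed $3$-$K$-contact and mixed $3$-Sasakian structures coincide (as established in \cite{CP}), having produced a positive (resp. negative) metric mixed $3$-contact structure on $P$ is the same as having a positive (resp. negative) mixed $3$-Sasakian structure, finishing the proof; as a bonus, Theorem \ref{cp} then tells us $P$ is Einstein with constant $\mp(4n+2)$ in the two cases.

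The main obstacle I expect is step (3), namely pinning down all the normalization constants simultaneously so that both the vertical part (governed by the Lie brackets, where the factor $2$ in the matrices $e_\alpha$ and the sign difference between (\ref{V15}) and (\ref{V16}) matter) and the horizontal part (governed by $\nu$ and hence by $Sc$) of $d\eta_\alpha$ reproduce $\Phi_\alpha$ with no leftover factors; in the Riemannian/quaternionic case this is a known but delicate bookkeeping, and the presence of the indefinite Killing-type form on $\mathrm{so}(2,1)$ together with the two sign families $\tau_\alpha$ and $\varepsilon_\alpha$ makes it more error-prone here. A secondary subtlety is checking that the definition of $\varphi_\alpha$ on the vertical distribution via (\ref{V4}) is consistent with (\ref{V6}), i.e. that $\varphi_\alpha\varphi_\beta$ restricted to $\mathcal{V}$ agrees with $\tau_\gamma\varphi_\gamma+\tau_\alpha\eta_\beta\otimes\xi_\alpha$; this is again pure $\mathrm{so}(2,1)$ bracket algebra but must be done for both bases $\mathcal{B}^\pm$ to get both structures. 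Everything else — the algebraic identities on the horizontal part, the metric compatibility, and the final identification — is routine once the connection-theoretic setup and the structure equations (\ref{V13}) are in hand.
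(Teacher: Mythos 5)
Your overall architecture is the right one and essentially parallels the paper's Konishi-type construction (fundamental vector fields $\xi_\alpha$, connection components as $\eta_\alpha$, the structure equations (\ref{V13}), and the equivalence of mixed 3-contact, 3-K-contact and 3-Sasakian structures from \cite{CP} via Remark \ref{r1}), but there is a concrete gap in where you place the normalization. The paper's metric is $\widetilde{g}=\nu\,\pi_*g+\sum_{i=1}^{3}\varepsilon_i\omega_i\otimes\omega_i$ as in (\ref{V21}): the reduced scalar curvature $\nu$ scales the \emph{horizontal} part, and this is precisely where the hypothesis $Sc\neq 0$ enters (nondegeneracy of $\widetilde{g}$). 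You instead take the horizontal lift of $g$ unscaled and propose in step (3) to fix ``the scaling constant of the vertical metric in terms of $\nu$''. This cannot work: once $\eta_\alpha=\omega_\alpha$ and the $\xi_\alpha$ are the fundamental vector fields (so $\eta_\alpha(\xi_\beta)=\delta_{\alpha\beta}$), the compatibility conditions (\ref{V7})--(\ref{V8}) force $\widetilde{g}(\xi_\alpha,\xi_\alpha)=\varepsilon_\alpha$, so there is no free vertical constant at all — your own setup already pins the vertical norms to $\varepsilon_\alpha$, contradicting the later $\nu$-dependent vertical rescaling. Meanwhile (\ref{V13}) gives the horizontal component of $d\eta_\alpha$ as $\tau_\alpha\nu\,\Omega_\alpha$, whereas with an unscaled horizontal metric and $\varphi_\alpha$ the lift of $J_\alpha$ the horizontal component of $\Phi_\alpha$ carries no $\nu$; the factor-$\nu$ mismatch lives in the horizontal slot and can only be absorbed by scaling the horizontal metric (as the paper does), not the vertical one. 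A secondary point: the form $(A,B)=\mathrm{Trace}(AB^{t})$ is definite on $\mathrm{so}(2,1)$, so no single rescaling of it produces the mixed signs $\varepsilon_\alpha$; one must take the indefinite vertical term $\sum\varepsilon_i\omega_i\otimes\omega_i$ (equivalently a Killing-type form up to sign), with the two sign choices giving the two cases.

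Apart from this, your route differs only mildly from the paper's: you define $\varphi_\alpha$ directly (lift of $J_\alpha$ on horizontal vectors, the relations (\ref{V4}) on vertical ones) and aim to verify the contact condition $d\eta_\alpha=\Phi_\alpha$, while the paper defines $\varphi_\alpha:=-\varepsilon_\alpha\widetilde{\nabla}\xi_\alpha$ as in (\ref{V24}), so the mixed 3-K-contact condition holds by construction, and then uses the Koszul formula together with (\ref{V13}) to show that this $\varphi_\alpha$ coincides with your endomorphism (formula (\ref{V30})) and satisfies the axioms (\ref{V1}), (\ref{V3})--(\ref{V7}); both routes finish by the same appeal to Remark \ref{r1}, and the paper handles the positive structure by repeating the argument with the basis $\mathcal{B}^-$, $\eta'_\alpha=-\omega_\alpha$, exactly as you anticipate. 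So the plan is sound in outline; the step that fails as written is your step (3), and the repair is to use the metric (\ref{V21}) with the $\nu$ on the horizontal part.
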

\begin{proof}
If $\omega_U=\displaystyle\sum_{i=1}^{3}\omega_ie_i$ is the
infinitesimal connection in $P$ considered above, then we can define
a semi-Riemannian metric $\widetilde{g}$ on $P$ by
\begin{equation}\label{V21}
\widetilde{g}=\nu\pi_*g+\sum_{i=1}^{3}\varepsilon_i\omega_i\otimes\omega_i,
\end{equation}
where $\nu$ is the reduced scalar curvature of $M$ and
$(\varepsilon_1,\varepsilon_2,\varepsilon_3)=(-1,-1,1)$ (Case I) or
$(\varepsilon_1,\varepsilon_2,\varepsilon_3)=(1,1,-1)$ (Case II).

We remark that unlike quaternionic setting (see \cite{KO}), the
signature of $\widetilde{g}$ does not depend on the sign of the
scalar curvature. In fact, the signature is $(2n+2,2n+1)$ in the
Case I and $(2n+1,2n+2)$ in the Case II.

Firstly, we consider the Case I. We define the 1-forms
$\eta_\alpha=\omega_\alpha$ for $\alpha=1,2,3$, and let $\xi_1,\
\xi_2,\ \xi_3$ be the fundamental vector fields corresponding to
$e_1,\ e_2,\ e_3$, respectively. Then from (\ref{V15}) we deduce
\[
[\xi_1,\xi_2]=2\xi_3,\ [\xi_2,\xi_3]=-2\xi_1,\ [\xi_3,\xi_1]=-2\xi_2
\]
and
\begin{equation}\label{V23}
\eta_\alpha(\xi_\beta)=\delta_{\alpha\beta},\ \alpha,\beta=1,2,3.
\end{equation}

If $\widetilde{\nabla}$ is the Levi-Civita connection of
$\widetilde{g}$, then we define
\begin{equation}\label{V24}
\varphi_\alpha=-\varepsilon_\alpha\widetilde{\nabla}\xi_\alpha,\
\alpha=1,2,3.
\end{equation}

Next we prove that
$((\varphi_\alpha,\xi_\alpha,\eta_\alpha)_{\alpha=\overline{1,3}},\widetilde{g})$
is a negative mixed 3-Sasakian structure on $P$.

If $p\in P$, then we denote by $T_p^V(P)$ the tangent space of a
fibre at $p$ and by $T_p^H(P)$ its orthogonal complemented space in
$T_pP$. Using now (\ref{V24}) and Koszul formula, we derive
\begin{equation}\label{V27}
\varphi_\alpha(\xi_\alpha)=0,\
\varphi_\alpha(\xi_\beta)=\tau_\beta\xi_\gamma,\ \
\varphi_\beta(\xi_\alpha)=-\tau_\alpha\xi_\gamma.
\end{equation}

From (\ref{V27}) it follows that $T_p^V(P)$ and $T_p^H(P)$ are
invariant under the action of $\varphi_\alpha$, $\alpha=1,2,3$.
Therefore we have:
\begin{equation}\label{V28}
\varphi_\alpha=\varphi^H_\alpha+\tau_\beta\eta_\beta\otimes\xi_\gamma-\tau_\gamma\eta_\gamma\otimes\xi_\beta
\end{equation}
for any even permutation $(\alpha,\beta,\gamma)$ of $(1,2,3)$, where
$\varphi^H_\alpha$ denotes the restricted actions of
$\varphi_\alpha$ on $T_p^H(P)$, for $\alpha=1,2,3$.

Now, from the structure equations (\ref{V13}), we obtain for each
neighborhood $U$ in $M$ and a local cross section $\psi:U\rightarrow
P$ the following relation
\[
(d\omega_\alpha+\tau_\alpha\omega_\beta\wedge
\omega_\gamma)(\psi_*X,\psi_*Y)=\tau_\alpha\nu g(J_\alpha X,Y),
\]
for any even permutation $(\alpha,\beta,\gamma)$ of $(1,2,3)$, where
$\psi_*$ denotes the differential of $\psi$. Since the curvature
form is horizontal, we have
\[(\varphi_\alpha-\tau_\beta\eta_\beta\otimes\xi_\gamma+\tau_\gamma\eta_\gamma\otimes\xi_\beta)(\psi_*X)=
(\psi_*J_\alpha X)^H,\] so we derive
\begin{equation}\label{V30}
\varphi_\alpha(\psi_*X)=(\psi_*J_\alpha
X)^H+\tau_\beta\eta_\beta(\psi_*X)\xi_\gamma-\tau_\gamma\eta_\gamma(\psi_*X)\xi_\beta.
\end{equation}

Using now (\ref{V25}), (\ref{V23}) and (\ref{V30}) we obtain
\begin{eqnarray}
\varphi_\alpha^2(\psi_*X)&=&\varphi_\alpha[(\psi_*J_\alpha
X)^H+\tau_\beta\eta_\beta(\psi_*X)\xi_\gamma-\tau_\gamma\eta_\gamma(\psi_*X)\xi_\beta]\nonumber\\
&=&(\psi_*J^2_\alpha
X)^H-\tau_\beta\tau_\gamma\eta_\gamma(\psi_*X)\xi_\gamma-\tau_\gamma\tau_\beta\eta_\beta(\psi_*X)\xi_\beta\nonumber\\
&=&-\tau_\alpha(\psi_*X)^H-\tau_\beta\tau_\gamma[\eta_\gamma(\psi_*X)\xi_\gamma+\eta_\beta(\psi_*X)\xi_\beta]\nonumber
\end{eqnarray}
and since $\tau_\alpha\tau_\beta\tau_\gamma=1$, we deduce
\begin{eqnarray}\label{V31}
\varphi_\alpha^2(\psi_*X)&=&-\tau_\alpha[(\psi_*X)^H+\eta_\gamma(\psi_*X)\xi_\gamma+\eta_\beta(\psi_*X)\xi_\beta]\nonumber\\
&=&-\tau_\alpha[\psi_*X-\eta_\alpha(\psi_*X)\xi_\alpha].
\end{eqnarray}

From (\ref{V23}) and (\ref{V31}) we conclude that
$(\varphi_1,\xi_1,\eta_1)$ and $(\varphi_2,\xi_2,\eta_2)$ are almost
paracontact structures on $P$, while $(\varphi_3,\xi_3,\eta_3)$ is
an almost contact structure on $P$. Now we are able to prove the
compatibility conditions between these structures.

Using again (\ref{V25}), (\ref{V23}) and (\ref{V30}) we obtain
\begin{eqnarray}
\varphi_\alpha\varphi_\beta(\psi_*X)&=&\varphi_\alpha[(\psi_*J_\beta
X)^H+\tau_\gamma\eta_\gamma(\psi_*X)\xi_\alpha-\tau_\alpha\eta_\alpha(\psi_*X)\xi_\gamma]\nonumber\\
&=&(\psi_*J_\alpha J_\beta X)^H+\tau_\alpha
\tau_\gamma\eta_\alpha(\psi_*X)\xi_\beta\nonumber\\
&=&\tau_\gamma[(\psi_*J_\gamma
X)^H+\tau_\alpha\eta_\alpha(\psi_*X)\xi_\beta]\nonumber\\
&=&\tau_\gamma[\varphi_\gamma(\psi_*X)+\tau_\beta\eta_\beta(\psi_*X)\xi_\alpha]\nonumber
\end{eqnarray}
and taking into account that $\tau_\alpha\tau_\beta\tau_\gamma=1$,
we deduce
\[
\varphi_\alpha\varphi_\beta(\psi_*X)=\tau_\gamma\varphi_\gamma(\psi_*X)+\tau_\alpha(\psi_*X)\xi_\alpha.
\]
Similarly we find
\[
\varphi_\beta\varphi_\alpha(\psi_*X)=-\tau_\gamma\varphi_\gamma(\psi_*X)+\tau_\beta(\psi_*X)\xi_\beta.
\]
and so the compatibility condition (\ref{V6}) is verified.
Analogously we find that the compatibility condition (\ref{V5}) is
checked. Moreover, from (\ref{V23}) and (\ref{V27}) we have that
(\ref{V3}) and (\ref{V4}) are also true.

On the other hand, using (\ref{V21}) and (\ref{V30}) we derive
\begin{eqnarray}
\widetilde{g}(\varphi_\alpha\psi_*X,\varphi_\alpha\psi_*Y)&=&\nu
g(J_\alpha X,J_\alpha
Y)+\varepsilon_\beta\eta_\gamma(\psi_*X)\eta_\gamma(\psi_*Y)+
\varepsilon_\gamma\eta_\beta(\psi_*X)\eta_\beta(\psi_*Y)\nonumber\\
&=&\tau_\alpha[\nu
g(X,Y)+\varepsilon_\beta\eta_\beta(\psi_*X)\eta_\beta(\psi_*Y)+\varepsilon_\gamma\eta_\gamma(\psi_*X)\eta_\gamma(\psi_*Y)]\nonumber\\
&=&\tau_\alpha[\widetilde{g}(\psi_*X,\psi_*Y)-\varepsilon_\alpha\eta_\alpha(\psi_*X)\eta_\alpha(\psi_*Y)].\nonumber
\end{eqnarray}
Hence the metric $\widetilde{g}$ is compatible with
$(\varphi_\alpha,\xi_\alpha,\eta_\alpha)$, for $\alpha={1,2,3}$.

Therefore, taking account of (\ref{V24}), we deduce that
$((\varphi_\alpha,\xi_\alpha,\eta_\alpha)_{\alpha=\overline{1,3}},\widetilde{g})$
is a negative mixed 3–-K-–contact structure in $P$, and so, via
Remark \ref{r1}, is a negative mixed 3–-Sasakian structure in $P$.

In the Case II, we consider $\xi'_1,\ \xi'_2,\ \xi'_3$ to be the
fundamental vector fields corresponding to $-e_1,\ -e_2,\ -e_3$,
respectively, and we define \[\eta'_\alpha=-\omega_\alpha,\
\varphi'_\alpha=-\varepsilon_\alpha\widetilde{\nabla}\xi'_\alpha,\]
for $\alpha=1,2,3$. Then, it can be proved similarly as in Case I
that
$((\varphi'_\alpha,\xi'_\alpha,\eta'_\alpha)_{\alpha=\overline{1,3}},\widetilde{g})$
is a positive mixed 3–-Sasakian structure in $P$.
\end{proof}

\begin{rem}
We note that in \cite{CP2}, the authors investigated the possible
projectability onto paraquaternionic structures of metric mixed
3–-contact structures defined on the total space of a $G$–-principal
bundle. In fact they proved that if $P(M,G,\pi)$ is a $G$–-principal
bundle, where the total space $P$ is endowed with a metric mixed
3–-contact structure
$((\varphi_\alpha,\xi_\alpha,\eta_\alpha)_{\alpha=\overline{1,3}},g)$,
$G$ is a Lie group acting on the right on $P$ by isometries having
Lie algebra isomorphic to $\text{so}(2,1)$ and such that the vector
fields $\xi_1,\xi_2,\xi_3$ are fundamental vector fields, then the
metric mixed 3–-contact structure projects via $\pi$ onto a
paraquaternionic K\"{a}hler structure $(\sigma,h)$ on the base space
$M$ and $\pi:(P,g) \rightarrow (M,h)$ is a pseudo–-Riemannian
submersion. Moreover, if the dimension of $P$ is $(4n+3)$, then one
has
\begin{equation}\label{ric}
\rho^M=\varepsilon4(n+2)h
\end{equation}
were $\rho^M$ denotes the Ricci curvature tensor of $M$ and
$\varepsilon=\mp1$, according as the mixed 3–-structure is positive
or negative.
\end{rem}
Let now $X,Y$ and $U,V$ be basic and vertical vector fields
throughout. The metric $g$ splits up as
\[ g(E,F)=g(X+U,Y+V)=g(X,Y)+g(U,V),\]
where $E=X+U, F=Y+V$ are vector fields on $P$. Then the canonical
variation of the metric $g$ for $t>0$, is a pseudo-Riemannian metric
$g_t$ on $P$ defined by (see \cite{BFLM})
\begin{equation}\label{cvd}
g_t (X+U,Y+V)=g(X,Y)+tg(U,V).
\end{equation}

Similarly as in the Riemannian case \cite{De}, it can be proved that
the curvature tensor $R_t$ of the canonical variation $g_t$
satisfies
\begin{eqnarray}\label{rt}
R_t(E,F,G,H)&=tR(E,F,G,H)+(1-t)R^N(X_*,Y_*,Z_*,Z'_*)+\\
&+t(1-t)(g(S_EH,S_FG)-g(S_EG,S_FH))\nonumber
\end{eqnarray}
where $E,F,G,H$ are vector fields on $P$, $X_*,Y_*,Z_*,Z'_*$ are the
corresponding vector fields on $M$ of the basic vector fields
$X,Y,Z,Z'$ and $S_EF=T_UV+A_XV+A_YU$, $E=X+U$, $F=Y+V$, $G=Z+W$,
$H=Z'+W'$, $T$ and $A$ being the O'Neill tensors of the submersion
(see \cite{FIP}). In this setting, we can state now the following

\begin{thm}
If  $\pi:(P,(\varphi_\alpha,\xi_\alpha,\eta_\alpha)_{\alpha=\overline{1,3}},g)
\rightarrow (M,\sigma,h)$ is a pseudo–-Riemannian submersion as in
the previous remark, then there exist a unique positive $t\neq1$
such that $g_t$ is also Einstein.
\end{thm}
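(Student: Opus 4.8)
The plan is to start from the canonical variation curvature formula (\ref{rt}) and contract it to obtain the Ricci tensor $\rho_t$ of $g_t$ in terms of the Ricci tensor $\rho$ of $g$, the Ricci tensor $\rho^M$ of the base (which by (\ref{ric}) equals $\varepsilon 4(n+2)h$ since $M$ is paraquaternionic K\"{a}hler), and the O'Neill tensors $T$ and $A$. Because $\pi$ projects a metric mixed $3$-contact structure onto $M$, the total space $(P,g)$ is itself a mixed $3$-Sasakian manifold, hence Einstein with Einstein constant $(4n+2)\varepsilon$ by Theorem \ref{cp}; this pins down $\rho$ completely. The fibres are $3$-dimensional, totally geodesic (the $\xi_\alpha$ being Killing with $\widetilde{\nabla}\xi_\alpha=-\varepsilon_\alpha\varphi_\alpha$), so $T\equiv 0$, and the only contribution of the O'Neill tensors to $\rho_t$ comes from $A$, whose norm on horizontal vectors is a constant multiple of the metric because the structure is homogeneous along the fibre directions (the $\varphi_\alpha$ are parallel in the horizontal-to-vertical directions up to the universal constants appearing in (\ref{m3s})). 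Concretely, $\sum_\alpha \varepsilon_\alpha g(A_X\xi_\alpha,A_Y\xi_\alpha)$ and $\sum_{i}\varepsilon_i g(A_{E_i}V,A_{E_i}W)$ will each be a fixed scalar times $g$, computable from (\ref{V30}) and (\ref{m3s}).

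The key steps, in order, are: (i) record that $T=0$ and compute the relevant $A$-traces, expressing $\sum \varepsilon_i g(A_{E_i}Y,A_{E_i}Z) = a\, g(Y,Z)$ on horizontal vectors and $\sum \varepsilon_i g(A_{E_i}V,A_{E_i}W) = b\, g(V,W)$ on vertical vectors for explicit constants $a,b$ depending on $n$ (here $b$ counts the $4n$ horizontal directions and $a$ counts the $3$ vertical directions, so one expects $a:b$ in the ratio $3:4n$ up to a common factor); (ii) contract (\ref{rt}) separately against a horizontal pair $(Y,Z)$ and a vertical pair $(V,W)$, using $R^N=\rho^M$ after contraction together with the scalar-curvature identity, to obtain
\[
\rho_t(Y,Z) = \big[t\cdot(4n+2)\varepsilon \cdot\tfrac{1}{t} - (1-t)\,\varepsilon 4(n+2)\cdot(\text{sign}) - 2t(1-t)\,a\big]\,g_t(Y,Z) + \dots
\]
(schematically; the exact coefficients come out of the contraction) and a parallel expression $\rho_t(V,W)=[\,\cdots\,]g_t(V,W)$ on the vertical part, while the mixed term $\rho_t(Y,V)$ vanishes by the block structure; (iii) impose that the horizontal eigenvalue equal the vertical eigenvalue. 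This is one equation in the single unknown $t>0$.

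I expect the main obstacle to be step (iii): the Einstein condition becomes a polynomial equation in $t$, and one must show it has exactly one admissible root $t\neq 1$. The setup is rigged so that $t=1$ is always a solution (since $g=g_1$ is already Einstein, $P$ being mixed $3$-Sasakian), so the polynomial factors as $(t-1)\,q(t)$; the real content is proving $q$ has a unique positive root and that this root is $\neq 1$. Because the vertical part contributes at order $\tfrac1t$ (the $\rho^N/t$-type term from the Sasakian fibre curvature) while the horizontal part is polynomial, clearing denominators should give a quadratic in $t$, one of whose roots is $1$; the other root is then forced to be positive by a sign analysis of the leading and constant coefficients in terms of $n$ and $\varepsilon$, and one checks separately it is not equal to $1$ (which would only fail on a measure-zero set of parameters, excluded here since $n\geq 1$ is an integer). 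Carrying out the contraction carefully and keeping track of the signs $\varepsilon_\alpha=(\mp1,\mp1,\pm1)$ in both Case I and Case II — verifying the same value of $t$ works in both — is the routine but delicate part.
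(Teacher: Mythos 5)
Your proposal follows essentially the same route as the paper's proof: contract the canonical-variation curvature formula (\ref{rt}), use that $(P,g)$ is Einstein by Theorem \ref{cp} and that $\rho^M=\varepsilon 4(n+2)h$ by (\ref{ric}), obtain the vertical, mixed and horizontal components of $\rho_t$, and reduce the Einstein condition to a quadratic in $t$ which factors through the known root $t=1$, leaving exactly one further positive root (the paper computes it explicitly as $t=\frac{2n+5}{2n}$). The only difference is organizational: where you propose to track the fibre contribution through $T=0$ and explicit $A$-tensor traces, the paper packages the same information via the mixed 3-Sasakian curvature identity $R(E,F)\xi_\alpha=\tau_\alpha(\eta_\alpha(F)E-\eta_\alpha(E)F)$ before contracting, so the two arguments coincide in substance.
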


\begin{proof}
We denote by $R$ the curvature tensor of manifold $P$ and by $\rho$,
$\rho^M$  the Ricci curvature tensor of manifold $P$, respectively
$M$. Using (\ref{V8}) and (\ref{m3s}) we obtain that
\begin{equation}\label{rm3s}
R(E,F)\xi_\alpha=\tau_\alpha(\eta_\alpha(F)E-\eta_\alpha(E)F),\
\alpha=1,2,3,
\end{equation}
where $E,F$ are vector fields on $P$.

Now using (\ref{cvd}), (\ref{rt}), (\ref{rm3s}) and the definitions
of mixed $3$-Sasakian and paraquaternionic K\"{a}hler structures we
obtain that the components of the Ricci tensor $\rho_t$ of the
canonical variation $g_t$ are given by
\begin{eqnarray}
\rho_t(U,V)&=&t \rho(U,V)+\{ -4n\varepsilon t^2+(4n \varepsilon+2\varepsilon)t-2\varepsilon\}g(U,V),\nonumber\\
\rho_t(U,X)&=&t \rho(U,X),\nonumber\\
\rho_t(X,Y)&=&t \rho(X,Y)+(1-t)\rho^M (X_*,Y_*),\nonumber
\end{eqnarray}
where $X,Y$ and $U,V$ are basic and vertical vector fields on $P$,
$X_*,Y_*$ are the vector fields on $M$ corresponding to the basic
vector fields $X,Y$,  $\varepsilon=\mp1$ according as the metric
mixed 3-structure is positive or negative, respectively.

From Theorem \ref{2.1}, Theorem \ref{cp} and (\ref{ric}) we derive
\begin{eqnarray}\label{rict}
\rho_t(U,V)&=&\{-4n\varepsilon t^2+(8n \varepsilon+4\varepsilon)t-2\varepsilon\}g(U,V),\nonumber\\
\rho_t(U,X)&=&0,\\
\rho_t(X,Y)&=&\{-6\varepsilon t +
4n\varepsilon+8\varepsilon\}g(X,Y),\nonumber
\end{eqnarray}

Now, from (\ref{rict}) we deduce easily that $g_t$ is Einstein if
and only if $t=1$ or $t=\frac{2n+5}{2n}$ and the conclusion follows.

\end{proof}

Gabriel Eduard V\^{I}LCU \\
         Petroleum-Gas University of Ploie\c sti,\\
         Department of Mathematics and Computer Science,\\
         Bulevardul Bucure\c sti, Nr. 39, Ploie\c sti 100680, Romania\\
         E-mail: gvilcu@upg-ploiesti.ro\\
         and\\
         University of Bucharest,
         Faculty of Mathematics and Computer Science,\\
         Research Center in Geometry, Topology and Algebra\\
         Str. Academiei, Nr. 14, Sector 1, Bucharest 70109, Romania\\
         E-mail: gvilcu@gta.math.unibuc.ro\\

Rodica Cristina VOICU \\
         University of Bucharest,
         Faculty of Mathematics and Computer Science\\
         Research Center in Geometry, Topology and Algebra\\
         Str. Academiei, Nr. 14, Sector 1, Bucharest 72200, Romania\\
         E-mail: rcvoicu@gmail.com
\end{document}